\theoremstyle{plain}
\newtheorem{theorem}{Theorem}[section]
\newtheorem{corollary}[theorem]{Corollary}
\newtheorem{lemma}[theorem]{Lemma}
\newtheorem{proposition}[theorem]{Proposition}
\theoremstyle{remark}
\newtheorem{remark}[theorem]{Remark}
\theoremstyle{definition}
\newtheorem*{aside*}{Aside}
\newtheorem{example}[theorem]{Example}
\newtheorem{proposition-definition}[theorem]{Proposition-Definition}
\newtheorem*{acknowledgements}{Acknowledgements}
\newtheorem*{userguide}{User's guide}
\newtheorem*{futuredirections}{Future directions}
\DeclareMathOperator{\ev}{ev}
\DeclareMathOperator{\vdim}{vdim}
\newcommand{\diff}{\operatorname{d}}
\newcommand{\C}{\mathbb{C}}
\newcommand{\T}{\operatorname{T}}
\newcommand{\PP}{\mathbb{P}}
\newcommand{\OO}{\mathcal{O}}
\newcommand{\tCoh}[1]{{\mathbf{t}}(#1)}
\newcommand{\qCoh}[1]{{\mathbf{q}}(#1)}
\newcommand{\pCoh}[1]{{\mathbf{p}}(#1)}
\newcommand{\HH}{\operatorname{H}}
\newcommand{\cohid}{\mathbb{1}_X}
\newcommand{\M}[4]{\overline{\mathcal{M}}_{#1,#2}(#3,#4)}
\newcommand{\virt}[1]{[#1]^{\operatorname{virt}}}
\newcommand{\bcd}{\begin{center}\begin{tikzcd}}
\newcommand{\ecd}{\end{tikzcd}\end{center}}
\newcommand{\pt}{\text{pt}}
\newcommand{\Cst}{\mathbb{C}^*}
\newcommand{\Lcal}{\mathcal{L}}
\newcommand{\Hcal}{\mathcal{H}}
\newcommand{\Fcal}{\mathcal{F}}
\begin{document}
 
\title[Fundamental solution and relative stable maps]{The fundamental solution matrix and relative~stable maps}
\author{Navid Nabijou}

\begin{abstract} Givental's Lagrangian cone $\Lcal_X$ is a Lagrangian submanifold of a symplectic vector space which encodes the genus-zero Gromov--Witten invariants of $X$. Building on work of Braverman, Coates has obtained the Lagrangian cone as the push-forward of a certain class on the moduli space of stable maps to $X \times \PP^1$. This provides a conceptual description for an otherwise mysterious change of variables called the dilaton shift.

In this article we recast this construction in its natural context, namely the moduli space of stable maps to $X \times \PP^1$ relative the divisor $X \times \infty$. We find that the resulting push-forward is another familiar object, namely the transform of the Lagrangian cone under the action of the fundamental solution matrix. This hints at a generalisation of Givental's quantisation formalism to the setting of relative invariants. Finally, we use a hidden polynomiality property implied by our construction to obtain a sequence of universal relations for the Gromov--Witten invariants, as well as new proofs of several foundational results concerning both the Lagrangian cone and the fundamental solution matrix. \end{abstract}
\maketitle
\setcounter{tocdepth}{1}
\tableofcontents

\section{Introduction}
The Gromov--Witten invariants of a smooth projective variety $X$ are defined as certain intersection numbers on moduli spaces of stable maps to $X$. They can be thought of as counting curves of specified genus and degree passing through specified cycles in $X$. Their intrinsic interest aside, Gromov--Witten invariants have connections to numerous other areas of mathematics, from representation theory to symplectic topology. In algebraic geometry they have been used in the proofs of classification theorems, as a tool for distinguishing non-deformation-equivalent varieties.

Many results in Gromov--Witten theory are expressed most cleanly via generating functions, that is, formal functions (usually polynomials or power series) whose coefficients are given by Gromov--Witten invariants. Oftentimes, a simple identity involving generating functions is all that is needed to express a relationship which, on the level of indvidual invariants, is extremely complicated. There is an underlying reason for this: Gromov--Witten theory has deep connections to theoretical physics, through which the aforementioned generating functions appear as the ``partition functions'' of physical theories. This circle of ideas has been extremely influential for the development of the subject, with the first major result in this direction being the celebrated Mirror Theorem \cite{CandelasEtAl, GiventalEquivariant, GiventalMirrorToric}.

In keeping with this spirit, A. Givental describes in \cite{GiventalQuantisation} a \emph{quantisation formalism} for Gromov--Witten invariants. In the genus-zero setting (when no ``quantisation'' is actually required), this amounts to encoding the Gromov--Witten invariants of $X$ in a \emph{Lagrangian cone}
\begin{equation*} \Lcal_X \subseteq \Hcal \end{equation*}
inside a certain symplectic vector space $\Hcal$, now called the \emph{Givental space}. The data of the cone $\Lcal_X$ is equivalent to the data of the generating functions discussed earlier, but it turns out to be a good idea to treat $\Lcal_X$ as a geometric object in its own right; many statements in Gromov--Witten theory can then be translated into statements about how $\Lcal_X$ transforms under certain symplectomorphisms of $\Hcal$.

The benefits of this quantisation formalism are twofold. From a theoretical viewpoint, it can be used to make rigorous sense of a number of deep predictions coming from physics. On the other hand, from a practical point of view, it has proven to be an extremely versatile framework in which to formulate and prove statements about Gromov--Witten invariants. Indeed, there are many results in Gromov--Witten theory which would be difficult to even state without the quantisation formalism: examples include the quantum Riemann--Roch formula \cite{CoatesGivental}, the crepant transformation conjecture \cite{CoatesIritaniJiangCrepant}, the Virasoro conjecture and various versions of the ``genus zero implies higher genus'' principle \cite{GiventalSemisimple}.

Building on work of Braverman \cite{Braverman}, T. Coates shows in \cite{CoatesLagrangianConeS1} that $\Lcal_X$ can be obtained as a ($\C^*$-localised) push-forward from the moduli space of stable maps to $X \times \PP^1$ (usually called the \emph{graph space}). This is motivated by Givental's heuristic description of $\Hcal$ as the $S^1$-equivariant cohomology of the loop space of $X$ \cite{GiventalHomological}, and gives a natural geometric interpretation for a mysterious change of variables, called the ``dilaton shift'', which is essential to the quantisation formalism. 

Coates' construction requires restricting to a certain open substack of the moduli space of stable maps to $X \times \PP^1$, before localising to a proper fixed locus (with respect to the natural $\C^*$-action on the moduli space) in order to push forward. With hindsight, this is really the push-forward from \emph{one} of the $\Cst$-fixed loci in the moduli space of \emph{relative} stable maps to the pair $(X \times \PP^1, X \times \infty)$.

A natural question to ask is then: what happens if we sum over \emph{all} the fixed loci? In this article we provide the answer (see Proposition~\ref{main result}): the result is the transform of the Lagrangian cone under the action of the fundamental solution matrix. The main tools used in the proof are the relative virtual localisation formula \cite[Theorem~3.6]{GraberVakil}, a virtual push-forward theorem for relative stable maps to the non-rigid target \cite[Theorem~5.2.7]{GathmannThesis} and a comparison lemma for psi classes, which we prove in \S \ref{subsection Lemma on psi classes}.

Because we are now summing over all fixed loci, we know that the resulting class must actually belong to the \emph{non-localised} equivariant cohomology. In practice, this means the following: we push-forward and obtain a class which, a priori, looks like a rational function in $z$; however we know that, after performing suitable cancellations, we must end up with a polynomial (here $z$ denotes the $\C^*$-equivariant parameter). We use this observation to give new and simple proofs of a number of foundational results belonging to the quantisation formalism theory.

\begin{futuredirections} This construction provides a hint as to how one might obtain a quantisation formalism for relative (or logarithmic) Gromov--Witten invariants; see Remark \ref{remark Log}. This was in fact the original motivation for this work.\end{futuredirections}

\begin{userguide} Readers familiar with Gromov--Witten theory and the quantisation formalism may skip straight to \S \ref{subsection Statement of the proposition} where we give the statement of the main result.  For the uninitiated, we provide in \S\S \ref{subsection Gromov--Witten invariants}-\ref{subsection Relative stable maps} a brief introduction to Gromov--Witten invariants, the Lagrangian cone and relative Gromov--Witten theory. The proof of the main result is given in \S \ref{section Proof of the proposition}; this is mostly a computation, with the only geometric content being a lemma on psi classes which we prove in \S \ref{subsection Lemma on psi classes}. Finally in \S \ref{subsection Variants and applications} we provide examples of how the ``hidden polynomiality'' implied by our construction can be used to obtain universal relations for the Gromov--Witten invariants, as well as new proofs of a number of standard results concerning the Lagrangian cone and the fundamental solution matrix.
\end{userguide}

\begin{acknowledgements} I owe a great deal of thanks to Tom Coates, for first suggesting this project, for patiently explaining the quantisation formalism to me and for pointing out some of the applications presented in \S \ref{subsection Variants and applications}. I would also like to thank Pierrick Bousseau, Elana Kalashnikov and Mark Shoemaker for useful discussions, and the referee for a number of helpful comments.

The author is supported by an EPSRC Standard DTP Scholarship and by the Engineering and Physical Sciences Research Council grant EP/L015234/1: the EPSRC Centre for Doctoral Training in Geometry and Number Theory at the Interface.\end{acknowledgements}

\section{Background and statement of the main result}
\subsection{Gromov--Witten invariants} \label{subsection Gromov--Witten invariants}
Throughout we fix a smooth projective variety $X$ over the complex numbers. The genus-zero Gromov--Witten invariants of $X$ are defined as certain integrals over moduli spaces of stable maps to $X$ \cite{KontsevichEnumeration}. Fixing a number $n \geq 0$ of marked points and a curve class $\beta \in \HH_2^+(X)$ (where $\HH_2^+(X) \subseteq \HH_2(X)$ is the submonoid of \emph{effective classes}, i.e. those which can be represented by algebraic curves), the moduli space of stable maps
\begin{equation*} \M{0}{n}{X}{\beta} \end{equation*}
parametrises holomorphic maps $f\colon C \to X$ of class $\beta$, where $C$ is a nodal curve of arithmetic genus zero with $n$ distinct non-singular marked points. There is a stability condition which stipulates that $f$ can only have finitely many automorphisms; this is equivalent to requiring that every component of $C$ which is contracted by $f$ contains at least $3$ special points (either marked points or nodes). The resulting moduli space is a proper Deligne--Mumford stack, with virtual dimension (sometimes also referred to as the expected dimension):
\begin{equation*} \vdim \M{0}{n}{X}{\beta} = \dim X - 3 -K_X\cdot \beta + n.\end{equation*}
Although it is not in general smooth or even irreducible, and can contain components in excess of the virtual dimension, it admits a virtual fundamental class of pure dimension equal to the virtual dimension: this should be thought of as the fundamental class of some suitably generic perturbation of the moduli space. The Gromov--Witten invariants are then defined as:
\begin{equation*} \langle \gamma_1 \psi_1^{k_1},\ldots,\gamma_n \psi_n^{k_n} \rangle^X_{0,n,\beta}:= \int_{\virt{\M{0}{n}{X}{\beta}}} \prod_{i=1}^n \ev_i^*(\gamma_i) \cdot \psi_i^{k_i}.\end{equation*}
In the above formula each $\gamma_i \in \HH^*(X)$ is a class on the target, while each $\psi_i$ is a class on the moduli space itself which has to do with the complex structure of the source curve near the $i$th marked point. Ignoring these latter terms (whose geometric interpretation is somewhat more involved \cite{GraberKockPandharipande}) the Gromov--Witten invariant $\langle \gamma_1,\ldots,\gamma_n \rangle^X_{0,n,\beta}$ should be thought of as a ``virtual'' count of rational curves in $X$ of class $\beta$ which pass through (representatives of) the classes $\gamma_1,\ldots,\gamma_n$.  For a more detailed discussion of stable maps and Gromov--Witten invariants, see \cite{FultonPandharipande}, \cite[\S 7]{CoxKatz}, \cite[\S 1]{GathmannThesis}.

\subsection{Givental space} \label{subsection Givental space} The Lagrangian cone $\Lcal_X$ is a geometric object which encodes all the genus-zero Gromov--Witten invariants of $X$. It can be viewed as the graph of a certain generating function for these invariants. This generating function must keep track, through its formal variables, of both the cohomological insertions $\gamma_i$ and the exponents $k_i$ of the classes $\psi_i$. We begin by defining a vector space $\Hcal$ whose co-ordinates will give precisely these formal variables; the Lagrangian cone will then be a submanifold of $\Hcal$.

We set $\HH^*(X) = \HH^*(X; \Lambda)$ where $\Lambda$ is some (unspecified) field of characteristic zero; for the moment it is safe to take $\Lambda=\C$, but later we will need to consider larger fields. We assume (for notational simplicity) that $X$ has only even cohomology, and choose a homogeneous basis $\varphi_0, \ldots, \varphi_N$ such that $\varphi_0 = \cohid$ is the unit element. We let $\varphi^0,\ldots,\varphi^N$ denote the dual basis with respect to the Poincar\'e pairing $(\cdot\, ,\cdot)$, so that:
\begin{equation*} (\varphi_\alpha,\varphi^\beta) = \delta_{\alpha}^{\beta}.\end{equation*}
The \emph{Givental space} $\Hcal$ is a certain infinite-dimensional symplectic vector space (over $\Lambda$) associated to $X$. It is defined as the space of formal Laurent series in a single variable $z^{-1}$ with coefficients in $\HH^*(X)$:
\begin{equation*} \Hcal := \HH^*(X) [z, z^{-1} \rrbracket = \left\{ \sum_{-\infty \leq k \leq m} q_k z^k \colon q_k \in \HH^*(X) \right\}.\end{equation*}
The notation above is meant to indicate that each series has only finitely many positive powers of $z$, but can have infinitely many negative powers. The powers of $z^{-1}$ will keep track of the exponents of the psi classes.

There is a symplectic form $\Omega$ on $\Hcal$ defined as follows
\begin{align*} \Omega \colon & \mathcal{H} \times \mathcal{H} \to \Lambda \\
& ( f(z), g(z) ) \mapsto \operatorname{Res}_{z=0} ( f(-z),g(z)) \operatorname{d}\!z \end{align*}
where $( f(-z),g(z))$ is the Poincar\'e pairing (extended linearly from $\HH^*(X)$ to $\Hcal$), and $\operatorname{Res}_{z=0}$ simply means that we take the coefficient of $z^{-1}$ in the resulting Laurent series. A straightforward computation verifies that $\Omega$ is indeed a symplectic form.

\begin{example} Take $X=\pt$ so that $\HH^*(X)=\Lambda$. Then $\Hcal = \Lambda[z,z^{-1}\rrbracket$ and $\Omega$ is given by:
\begin{equation*} \Omega \left( \sum_{k} a_k z^k , \sum_l b_l z^l \right) = \operatorname{Res}_{z=0} \left( \sum_k \sum_l (-1)^k a_k b_l z^{k+l} \right) = \sum_{k+l=-1} (-1)^k a_k b_l.\end{equation*}
Notice that this sum is finite since the terms which appear must have either $k$ or $l$ non-negative, and there are only finitely many such values for which $a_k$ and $b_l$ are both non-zero.
\end{example}

Thus $(\Hcal,\Omega)$ is an infinite-dimensional symplectic vector space. We will now write down Darboux co-ordinates. It is clear that the following defines a basis for $\mathcal{H}$:
\begin{align*} A_{\alpha}^k & := \varphi_{\alpha} z^k \qquad\qquad\ \, k \geq 0, \alpha=0,\ldots,N\\
B^{\gamma}_l & := \varphi^{\gamma} (-z)^{-1-l} \qquad l \geq 0, \gamma=0,\ldots,N.
\end{align*}
It is also easy to see that these give Darboux co-ordinates, i.e. that we have:
\begin{align*} & \Omega(A_{\alpha}^k, A_{\alpha^\prime}^{k^\prime}) = 0, \qquad \Omega(B^{\gamma}_l,B^{\gamma^\prime}_{l^\prime}) = 0, \qquad\Omega(A_{\alpha}^k, B^{\gamma}_l) = -\delta_{\alpha}^{\gamma} \delta^{k}_l.\end{align*}
Using these canonical co-ordinates we can define linear subspaces $\mathcal{H}_+$ and $\mathcal{H}_-$ to be the spans, respectively, of the $A^{k}_\alpha$ and $B^{\gamma}_l$ inside $\Hcal$:
\begin{align*} \mathcal{H}_{+} & := \HH^*(X)[z] = \left\{ \sum_{k \geq 0} q_k^\alpha \varphi_\alpha z^k \colon q_k^\alpha \in \Lambda \right\} \\
\mathcal{H}_- & := z^{-1} \HH^*(X) \llbracket z^{-1} \rrbracket = \left\{ \sum_{l \geq 0} p^l_\gamma \varphi^\gamma (-z)^{-1-l} \colon p^l_\gamma \in \Lambda \right\}.\end{align*}
Here, and in what follows, we adopt the Einstein summation convention when dealing with Greek letters, i.e. when summing over cohomology classes $\varphi_\alpha$ and $\varphi^\gamma$. It is clear that both $\Hcal_+$ and $\Hcal_-$ are Lagrangian subspaces, in the sense that:
\begin{equation*} \mathcal{H}_\pm^\perp = \left\{ v \in \Hcal \ \bigg| \ \Omega(v,w)=0 \text{ for all } w \in \mathcal{H}_\pm \right\} = \mathcal{H}_\pm .\end{equation*}
Thus we think of $\mathcal{H}_+$ and $\mathcal{H}_-$ as being ``half-dimensional'' or ``semi-infinite'' (since in the finite-dimensional setting a Lagrangian subspace is always half-dimensional). Furthermore this splitting gives an identification of symplectic vector spaces
\begin{equation*} \Hcal = \T^*\mathcal{H}_+ \end{equation*}
which means that $\mathcal{H}_-$ gets identified with the cotangent fibre; in terms of the co-ordinates $q^\alpha_k$, $p_\gamma^l$ above, the identification is:
\begin{equation*} p_\alpha^k = \dfrac{\partial}{\partial q^\alpha_k}. \end{equation*}

\subsection{Lagrangian cone} \label{subsection Lagrangian cone}
We are now in a position to construct the Lagrangian cone $\Lcal_X$. A standard object in Gromov--Witten theory is the \emph{genus-zero descendant potential}, which is a formal generating function for the genus-zero Gromov--Witten invariants:
\begin{equation*} \Fcal^0_X ( \tCoh{z} ) = \sum_{\beta, n} \dfrac{Q^\beta}{n!} \langle \tCoh{\psi_1}, \ldots, \tCoh{\psi_n} \rangle^X_{0,n,\beta}.\end{equation*}
Let us explain the notation above. The sum is over all curve classes $\beta \in \HH_2^+(X)$ and non-negative integers $n \geq 0$. The variable $Q$ is a formal variable, called the \emph{Novikov variable}, which keeps track of the curve class. We make sense of this by taking the ground field $\Lambda$ to be the \emph{Novikov field}:
\begin{equation*} \Lambda = \C (( \HH_2^+(X) )). \end{equation*}
Remember that we defined $\HH^*(X) = \HH^*(X;\Lambda)$ for some unspecified field $\Lambda$; from now on we take $\Lambda$ to be the Novikov field. The parameter $\tCoh{z}$ of the generating function is a formal power series with coefficients in $\HH^*(X)$
\begin{align*} \tCoh{z} & = \sum_{k \geq 0} t_k z^k \qquad \ \ \ \ \, t_k \in \HH^*(X) \\
& = \sum_{k \geq 0} t_k^\alpha \varphi_\alpha z^k \qquad t_k^\alpha \in \Lambda\end{align*}
so that the correlators above are interpreted as
\begin{align*} \langle \tCoh{\psi_1}, \ldots, \tCoh{\psi_n} \rangle^X_{0,n,\beta} := &\, \left\langle \sum_{k_1 \geq 0} t_{k_1}^{\alpha_1} \varphi_{\alpha_1} \psi_1^{k_1} , \ldots, \sum_{k_n \geq 0} t_{k_n}^{\alpha_n} \varphi_{\alpha_n} \psi_n^{k_n} \right\rangle^X_{0,n,\beta}\\
= & \sum_{k_1,\ldots,k_n \geq 0} t_{k_1}^{\alpha_1} \cdots t_{k_n}^{\alpha_n} \langle \varphi_{\alpha_1} \psi_1^{k_1},\ldots,\varphi_{\alpha_n} \psi_n^{k_n} \rangle^X_{0,n,\beta} \end{align*}
(remember that we are using the Einstein summation convention for the Greek letters). Thus we may rewrite $\Fcal^0_X$ in a more transparent (though less convenient) form as:
\begin{equation*} \Fcal^0_X( \tCoh{z} ) = \sum_{\beta,n} \dfrac{Q^\beta}{n!} \sum_{k_1,\ldots,k_n \geq 0} t_{k_1}^{\alpha_1} \cdots t_{k_n}^{\alpha_n} \cdot \langle \varphi_{\alpha_1} \psi_1^{k_1},\ldots,\varphi_{\alpha_n} \psi_n^{k_n} \rangle^X_{0,n,\beta}. \end{equation*}
We view this as a formal power series in the variables $t_k^\alpha$ for $k \geq 0$ and $\alpha = 0,\ldots,N$. Notice that these co-ordinates are indexed by the same set as the co-ordinates $q_k^\alpha$ for $\Hcal_+$ defined in \S \ref{subsection Givental space}; the two are related by the following change of variables
\begin{equation*} \qCoh{z} = \tCoh{z} - z \cohid  \end{equation*}
called the \emph{dilaton shift}. In concrete terms this means that $q_k^\alpha = t_k^\alpha$ unless $(k,\alpha) = (1,0)$, in which case $q_1^0 = t_1^0 - 1$. Under this change of variables, we can view $\Fcal^0_X$ as a function
\begin{equation*} \Fcal^0_X \colon \Hcal_+ \to \Lambda \end{equation*}
and hence the derivative $\diff\!\Fcal^0_X$ defines a section of the cotangent bundle $\T^* \Hcal_+$. The Lagrangian cone is defined as the graph of this section:
\begin{equation*} \Lcal_X := \left\{ (\qCoh{z} , \pCoh{z}) \in \Hcal = \Hcal_+ \oplus \Hcal_-  \ \bigg| \ \pCoh{z} = \diff\!\Fcal^0_X ( \qCoh{z} ) \right\}.\end{equation*}
Thus for every point $\qCoh{z} \in \Hcal_+$ there is a unique point of $\Lcal_X$ lying over $\qCoh{z}$. In concrete terms, this is:
\begin{align*} \Lcal_X|_{\qCoh{z}} & = \left( \tCoh{z} - z\cohid \right) + \sum_{\beta,n} \dfrac{Q^\beta}{n!} \sum_{l \geq 0} \left\langle \tCoh{\psi_1}, \ldots, \tCoh{\psi_n}, \varphi_\gamma \psi_{n+1}^l \right\rangle^X_{0,n+1,\beta} \cdot \varphi^\gamma (-z)^{-l-1} \\
& = \left( \tCoh{z} - z\cohid \right) + \sum_{\beta,n} \dfrac{Q^\beta}{n!} \left\langle \tCoh{\psi_1}, \ldots, \tCoh{\psi_n}, \left(\dfrac{\varphi_\gamma}{-z-\psi_{n+1}} \right) \right\rangle^X_{0,n+1,\beta} \cdot \varphi^\gamma . \end{align*}
The first term $\tCoh{z} - z\cohid = \qCoh{z}$ specifies the point in the base, while the remaining terms specify the point in the fibre. The meaning of the fractional insertion in the second line is that it should be expanded as a power series in $z^{-1}$, the result of which is precisely the expression on the first line.

As it has been presented, divorced from its origins in physics, $\Lcal_X$ may come across as a mysterious object. Working with it takes some getting used to, but the eventual payoff is significant, and it is now recognised as a fundamental tool in Gromov--Witten theory. To give just a taste of this, we state a few basic facts about the Lagrangian cone.

\begin{theorem}[{\cite[Proposition 1]{CoatesGivental}}] \label{theorem properties of the cone} The following basic properties hold:
\begin{itemize}
\item $\Lcal_X$ is a cone (it is preserved under scalar multiplication by elements of $\Lambda$);
\item for $f \in \Lcal_X$, we have $(\T_f\Lcal_X) \cap \Lcal_X = z \cdot \T_f \Lcal_X \subseteq \Hcal$;
\item the set of all tangent spaces to $\Lcal_X$ forms a finite-dimensional family; thus $\Lcal_X$ is ruled by a finite-dimensional family of linear subspaces.
\end{itemize} \end{theorem}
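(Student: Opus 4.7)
The plan is to prove each of the three properties using a universal relation among descendant genus-zero Gromov--Witten invariants: the dilaton equation for the cone property, the genus-zero topological recursion relations (TRR) together with the splitting axiom for the tangent-space property, and a formal consequence of the latter for the finite-dimensional-family statement.

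For the cone property, I would express the Euler vector field $E = \sum_{k,\alpha} q_k^\alpha \, \partial/\partial q_k^\alpha$ on $\Hcal_+$ in the coordinates $\tCoh{z}$ via the dilaton shift $\qCoh{z} = \tCoh{z} - z \cohid$, obtaining $\sum_{k,\alpha} t_k^\alpha \, \partial/\partial t_k^\alpha - \partial/\partial t_1^0$. Invariance of $\Lcal_X$ under the flow of $E$ reduces to the homogeneity identity $E \cdot \Fcal^0_X = 2 \Fcal^0_X$ (modulo the quadratic corrections absorbed by the dilaton shift), which is precisely the genus-zero dilaton equation combined with the handful of unstable contributions from $(n,\beta) \in \{(2,0),(3,0)\}$.

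The substantive step is the second property. Fix $f = f(\tau) \in \Lcal_X$; the tangent space $\T_f \Lcal_X$ is spanned by the partial derivatives $v_k^\alpha := \partial f / \partial t_k^\alpha$. To check $z \cdot v_k^\alpha \in \Lcal_X$, I would manipulate the factor $z$ inside each descendant correlator via the algebraic identity $z/(-z - \psi) = -1 - \psi/(-z - \psi)$, and then invoke the genus-zero TRR, which expresses each $\psi$-class on $\overline{\mathcal{M}}_{0,n+1}(X,\beta)$ as a sum of boundary divisors. The splitting axiom factorises the correlators across these divisors, and after bookkeeping the output reassembles into another tangent vector in $\T_f \Lcal_X$. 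This yields $z \cdot \T_f \Lcal_X \subseteq \T_f \Lcal_X \cap \Lcal_X$; the reverse containment follows because the map sending a tangent vector to its leading (constant in $z$) term is surjective onto $\HH^*(X)$, so any tangent vector in $\Lcal_X$ with vanishing leading term already lies in $z \cdot \T_f \Lcal_X$. This TRR-based computation is the main obstacle: it is mechanical but requires careful formal bookkeeping of boundary contributions.

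The third property is then formal. The inclusion $z \cdot \T_f \Lcal_X \subseteq \T_f \Lcal_X$ makes each tangent space a $\Lambda \llbracket z \rrbracket$-submodule of $\Hcal$, with finite-dimensional quotient $\T_f \Lcal_X / z \T_f \Lcal_X \cong \HH^*(X)$. A semi-infinite subspace of this form is determined by finite data, so the family $\{\T_f \Lcal_X\}_{f \in \Lcal_X}$ is finite-dimensional, and the subspaces $\{z \cdot \T_f \Lcal_X\} \subseteq \Lcal_X$ provide the advertised ruling.
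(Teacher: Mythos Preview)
The paper does not actually prove this theorem: it is stated with a citation to Coates--Givental, and the text immediately following it remarks that the three properties are equivalent to the string equation, the dilaton equation, and the topological recursion relations. Later, in \S\ref{subsection Variants and applications}, the paper gives independent localisation-based proofs of two related but weaker facts (that $\Lcal_X$ is Lagrangian, and that $f \in z\cdot\T_f\Lcal_X$), but it never proves the full statement of Theorem~\ref{theorem properties of the cone}.

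Your approach is the classical one the paper alludes to, so in spirit you are on the right track. However, there is a genuine gap: you do not invoke the string equation anywhere, and the paper (following Givental and Coates--Givental) is explicit that all three of string, dilaton and TRR are needed. The place where this bites is your argument for the reverse inclusion $(\T_f\Lcal_X)\cap\Lcal_X \subseteq z\cdot\T_f\Lcal_X$. You assert that ``any tangent vector in $\Lcal_X$ with vanishing leading term already lies in $z\cdot\T_f\Lcal_X$,'' but this is not what needs to be shown, and in any case does not follow from what you have established. Given $g \in (\T_f\Lcal_X)\cap\Lcal_X$, one must show $g/z \in \T_f\Lcal_X$; the standard route is to use the string equation to prove that the tangent space to $\Lcal_X$ is constant along the locus $z\cdot\T_f\Lcal_X \subseteq \Lcal_X$, and then a dimension count (both $z\cdot\T_f\Lcal_X$ and $(\T_f\Lcal_X)\cap\Lcal_X$ are codimension-$\dim\HH^*(X)$ in $\T_f\Lcal_X$) finishes the argument. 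Your sketch of the third property also leans on this: the finite-dimensionality of the family of tangent spaces is exactly the statement that $\T_f\Lcal_X$ depends only on a parameter in $\HH^*(X)$, which again comes from the string equation.
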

\noindent Thus we see that the geometry of $\Lcal_X$ is very tightly constrained. The above theorem is actually equivalent \cite[Theorem 1]{GiventalSymplecticFrobenius} to the following three fundamental results in Gromov--Witten theory: the string equation, the dilaton equation and the topological recursion relations. More generally, the Lagrangian cone can be used to conveniently express statements which would be exceedingly cumbersome to phrase otherwise. For more on this, see \cite{GiventalQuantisation}, \cite{CoatesIritaniFock}.

Finally, we note that the dilaton shift $\qCoh{z} = \tCoh{z} - z\cohid$ is an essential part of the theory; for instance, $\Lcal_X$ is not even a cone in the $\tCoh{z}$ co-ordinates.

\subsection{Fundamental solution matrix} \label{subsection Fundamental solution matrix} There is one more object in Gromov--Witten theory which we must define. The \emph{fundamental solution matrix} is a family of symplectic operators on the Givental space $\Hcal$ (so named because it encodes a fundamental set of solutions to the quantum differential equations \cite{Dubrovin2D}). For our purposes it depends on a parameter $\qCoh{z} \in \Hcal_+$, and is given by:
\begin{equation*} S_{\tCoh{z}}(f) = f + \sum_{\beta,n} \dfrac{Q^\beta}{n!} \left\langle \left( \dfrac{f}{z-\psi_{0}}\right),\tCoh{\psi_1},\ldots,\tCoh{\psi_n}, \varphi_\gamma \right\rangle^X_{0,n+2,\beta}\cdot \varphi^\gamma .\end{equation*}
Here the insertion $f \in \Hcal$ is expanded linearly in the $z$ and $\varphi_\alpha$, and $\tCoh{z}$ is the dilaton-shifted element corresponding to $\qCoh{z}$ (we write $S_{\tCoh{z}}$ instead of $S_{\qCoh{z}}$ to keep our notation compatible with standard usage). As with the Lagrangian cone, the fundamental solution matrix has deep connections to physics, and has been the focus of intense study. We will not attempt to say more than this here; the interested reader should consult \cite{PandharipandeRationalCurves} and \cite[\S 10]{CoxKatz}.

In this article we will view $S$ as a single endomorphism of the trivial $\Hcal$-bundle over $\Hcal_+$
\bcd
\Hcal_+ \times \Hcal \ar[rr,"S"] \ar[rd] & & \Hcal_+ \times \Hcal \ar[ld] \\
& \Hcal_+ &
\ecd
where the endomorphism $\Hcal \to \Hcal$ over $\qCoh{z} \in \Hcal_+$ is given by $S_{\tCoh{z}}$. We can also view the Lagrangian cone as a submanifold of $\Hcal_+ \times \Hcal$ by doubling the base co-ordinate:
\begin{equation*} \Lcal_X = \left\{ (\qCoh{z},\qCoh{z},\pCoh{z}) \  \bigg| \ \pCoh{z} = \operatorname{d}\!\Fcal^0_X ( \qCoh{z} ) \right\} \subseteq \Hcal_+ \times \Hcal . \end{equation*}
Thus, we can define the transform $S(\Lcal_X) \subseteq \Hcal_+ \times \Hcal$ of $\Lcal_X$ by $S$ without having to specify a parameter $\qCoh{z}$. This will be important for the statement of our main result.

\subsection{Relative stable maps} \label{subsection Relative stable maps} The final ingredient which we need to explain is the theory of relative stable maps. Given a smooth projective variety $Z$ and a smooth hypersurface $Y \subseteq Z$, the moduli space of relative stable maps parametrises stable maps in $Z$ with fixed tangency orders to $Y$ at the marked points. If there are $n$ marked points then this tangency information is encoded in a vector $\alpha=(\alpha_1,\ldots,\alpha_n)$ of non-negative integers. The resulting moduli space
\begin{equation*} \M{0}{\alpha}{Z|Y}{\beta} \end{equation*}
should parametrise stable maps to $Z$ such that the $i$th marked point has tangency order $\alpha_i$ to the divisor $Y$ (by convention, $\alpha_i=0$ means that the marked point is not mapped into the divisor at all, while $\alpha_i=1$ means it is mapped into the divisor transversely; as such, the map only truly becomes ``tangent'' to the divisor when $\alpha_i \geq 2$). This data must satisfy the obvious numerical condition $\Sigma_i \alpha_i = Y \cdot \beta$. The question of how to define these spaces rigorously is a non-trivial one; the problem with the na\"ive approach described above is that the deformation theory can become extremely wild when there are components of the source curve mapping into $Y$; this wildness means that the usual construction of the virtual fundamental class no longer works, so these spaces cannot be used to define invariants.

The earliest solution to this problem, due to J. Li and following ideas first developed in symplectic geometry, is to allow the target $Z$ to degenerate into a so-called expanded degeneration $Z[l]$ \cite{Li1, Li2}. The space $Z[l]$ is constructed from $Z$ by gluing on a chain of $l$ copies of the projective completion of the normal bundle to $Y$ in $Z$:
\begin{equation*} P = \PP_Y \left(\operatorname{N}_{Y|Z} \oplus \OO_Y \right). \end{equation*}
The picture is as follows (which illustrates the case $Z[2]$):

\begin{center}
\begin{tikzpicture}[scale=0.6]
\draw[fill=gray!20] (-5,-2) -- (-5,2) -- (-2,3.5) -- (-2,-0.5) -- (-5,-2);
\draw[fill=gray!20] (-2,3.5) -- (-2,-0.5) -- (1,-2) -- (1,2) -- (-2,3.5);
\draw[fill=gray!20] (1,-2) -- (1,2) -- (4,3.5) -- (4,-0.5) -- (1,-2);

\draw (-3.5,1) node[below]{$Z$};
\draw (-0.5,1) node[below]{$P$};
\draw (2.5,1) node[below]{$P$};

\draw (-2,-0.6) node[below]{$Y_1$};
\draw (1,2) node[above]{$Y_2$};
\draw (4,0.5) node[right]{$Y_\infty$};
\end{tikzpicture}
\end{center}
The idea is that, whenever a component of the source curve starts to fall into the divisor, the target ``bubbles'' off an extra copy of $P$, and the internal component is then mapped (transversely) into $P$. 

\begin{center}
\begin{tikzpicture}[scale=0.6]
\draw[fill=gray!20] (-12,-2) -- (-12,2) -- (-9,3.5) -- (-9,-0.5) -- (-12,-2);
\draw[thick,color=red] (-12,-1.2) to [out=0,in=270] (-9,1) to [out=90,in=0] (-12,1.6);
\draw[fill=red,color=red] (-9,1) circle[radius=3pt];
\draw (-9,-0.2) node[right]{$Y$};

\draw(1,-1.8) node[right]{$Y_\infty$};

\draw[->,decorate,decoration=snake] (-8,1) -- (-6,1);

\draw[fill=gray!20] (-5,-2) -- (-5,2) -- (-2,3.5) -- (-2,-0.5) -- (-5,-2);
\draw[fill=gray!20] (-2,3.5) -- (-2,-0.5) -- (1,-2) -- (1,2) -- (-2,3.5);
\draw[thick,color=red] (-5,0) to [out=0,in=170] (-3.5,-0.5) to [out=0,in=180] (-2,1);
\draw[fill=red,color=red] (-2,1) circle[radius=3pt];
\draw[thick,color=red] (-2,1) to [out=0,in=180] (-0.5,1.5) to [out=0,in=135] (1,0);
\draw[fill=red,color=red] (1,0) circle[radius=3pt];
\end{tikzpicture}
\end{center}
Two such maps into $P$ are identified if they differ by an element of the group $\C^*$ of automorphisms of $P$ given by rescalings of the fibre. As illustrated above, the resulting map to $Z[l]$ is transverse in a very strong sense: the only points of the curve which map to the infinity divisor are the markings $x_i$, and they do so with the correct tangency order $\alpha_i$. On the other hand, the curve can only map to the singular locus at a finite number of isolated nodal points, and for each node the tangency orders of the two adjacent branches of the curve to the singular locus must be equal. This transversality condition, usually called predeformability, ensures that the resulting moduli space has the correct virtual dimension. An extremely careful analysis of the deformation theory of this new space then shows that a virtual class can be defined \cite{Li2}. Integrals against this virtual class are called \emph{relative Gromov--Witten invariants} of $(Z,Y)$. In our applications we will always have $Z=X \times \PP^1$ and $Y=X\times \infty$. In this case the normal bundle of $Y$ in $Z$ is trivial, so $P \cong X \times \PP^1 = Z$ and thus all the levels of the expanded degeneration, including level $0$, are isomorphic.

We will assume that the reader is reasonably familiar with relative stable maps; all the facts which we will use can be found in \S\S 2-3 of \cite{GraberVakil}, which also serves as a good introduction to relative Gromov--Witten theory.

\begin{remark} \label{remark Log} More recently, the theory of \emph{logarithmic stable maps}, as developed by D. Abramovich, Q. Chen, M. Gross and B. Siebert,  has provided an alternative (and significantly more general) approach to relative stable maps \cite{GrossSiebertLog} \cite{ChenLog} \cite{AbramovichChenLog}. We expect that the computations we carry out here will carry over to the log setting, once a suitable localisation formula has been established for log stable maps. Indeed, log Gromov--Witten theory relative a simple normal crossings divisor seems to be the correct generality in which to apply the construction given in this article. \end{remark}

\subsection{Statement of the main result} \label{subsection Statement of the proposition} We are finally in a position to state our main result. Let $X$ be a smooth projective variety. For $\beta \in \HH_2^+(X)$ and $n \geq 0$, consider the moduli space
\begin{equation*} \overline{\mathcal{M}}_{0,n,(1)}\left( (X\times\PP^1 \ | \ X \times \infty), (\beta,1) \right) \end{equation*}
of relative stable maps to $(X\times\PP^1,X \times \infty)$ of class $(\beta,1)$, where the first $n$ marked points $x_1,\ldots,x_n$ have tangency $0$ with the divisor, and the last marked point $x_\infty$ has tangency $1$. There is a natural $\C^*$-action on this moduli space induced by the action on the target $X\times \PP^1$ (acting trivially on the first factor and with weight $-1$ on the second). Consider the following class in the equivariant cohomology of the moduli space
\begin{equation*} \Theta_{\beta,n}(\tCoh{z}) =  (-z) \cdot \prod_{i=1}^n \ev_i^*(\tCoh{\psi_i}) \end{equation*}
where $z$ is the equivariant parameter. Here each $\ev_i$ is viewed as mapping into $X$, via the composition:
\begin{equation*} \overline{\mathcal{M}}_{0,n,(1)}\left( (X\times\PP^1 \ | \ X \times \infty), (\beta,1) \right) \xrightarrow{\ev_i} X \times \PP^1 \xrightarrow{\pi_1} X.\end{equation*}
(Note that this morphism is equivariant with respect to the trivial action on $X$.) We then have:
\begin{proposition}\label{main result}
\begin{equation} \label{main equation} (\ev_\infty)_* \left( \sum_{\beta,n} \dfrac{Q^\beta}{n!} \cdot \Theta_{\beta,n}(\tCoh{z}) \right) = S(\Lcal_X)|_{\qCoh{z}} \end{equation}
where $\qCoh{z}$ is the dilaton-shifted co-ordinate corresponding to $\tCoh{z}$.
\end{proposition}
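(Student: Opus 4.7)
The plan is to apply the relative virtual localisation formula of Graber--Vakil \cite[Theorem~3.6]{GraberVakil} to the $\C^*$-action on the moduli space appearing in \eqref{main equation} and identify the resulting sum over fixed loci with the explicit formulas defining $S$ and $\Lcal_X$. Since the target degree in the $\PP^1$-direction equals one, every fixed map contains a unique distinguished component $C_0 \cong \PP^1$ mapping isomorphically to some fibre $\{p\} \times \PP^1$ at some level of the expanded target, while all remaining components are contracted either to $X \times \{0\}$ or to the singular loci between expansion levels. The fixed loci are therefore naturally indexed by a level $l \geq 0$ of the expansion, a splitting $\beta = \beta_0 + \beta_\infty$, and a partition of the markings $\{1, \ldots, n\} = S_0 \sqcup S_\infty$.

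In the level-zero case, the tangency condition forces $x_\infty$ to sit at the $\infty$-end of $C_0$, and the fixed locus is $\M{0}{n+1}{X}{\beta}$ with $\ev_\infty$ coinciding with the evaluation at the attaching marking. A direct computation of the equivariant Euler class of the virtual normal bundle shows that the cotangent weight at the $\infty$-end of $C_0$ combines with the factor $(-z)$ in $\Theta_{\beta,n}$ to produce the dilaton-shifted term $\tCoh{z} - z\cohid$, while the node-smoothing factor $(-z - \psi_{n+1})$ combines with the evaluations at the markings in $S_0$ to reproduce $\Lcal_X|_{\qCoh{z}}$ exactly. In the positive-level case a non-trivial rubber piece attaches over $\infty$: a relative map to the non-rigid target $(X \times \PP^1)/\C^*$ carrying tangency $1$ at both $x_\infty$ and the new attaching node. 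The virtual normal bundle now contributes an additional factor $1/(z - \psi_\star)$ at the connecting marking $\star$ on the $X$-side, which, via the projection formula applied to $\ev_\infty$ and the dual-basis expansion $\sum_\gamma \varphi_\gamma \otimes \varphi^\gamma$, assembles into precisely the kernel $f/(z - \psi_0)$ defining $S_{\tCoh{z}}$.

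To evaluate the rubber integrals I invoke Gathmann's virtual pushforward theorem \cite[Theorem~5.2.7]{GathmannThesis} for non-rigid targets, which reduces them to ordinary genus-zero Gromov--Witten integrals on $X$. Matching the psi classes on the rubber moduli space with the absolute psi classes on $\M{0}{n+2}{X}{\beta_\infty}$ requires the comparison lemma to be established in \S\ref{subsection Lemma on psi classes}. Summing all contributions, the level-zero fixed loci reproduce $\Lcal_X|_{\qCoh{z}}$, while the positive-level fixed loci convolve it against the $S$-kernel, together yielding $S(\Lcal_X)|_{\qCoh{z}}$. The main technical obstacle is the positive-level bookkeeping: several normal bundle contributions (cotangent lines of $C_0$ at its two ends, node smoothings, and infinitesimal deformations of the target expansion) must conspire into the single factor $1/(z - \psi_\star)$; beyond this, the psi class comparison is the only essential geometric input, the rest being a formal computation.
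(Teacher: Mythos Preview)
Your overall strategy matches the paper's: relative virtual localisation \`a la Graber--Vakil, Gathmann's virtual pushforward for the rubber, and the psi-class comparison lemma. However, your description of the fixed loci is inaccurate in a way that would derail the computation. The fixed loci are \emph{not} indexed by a level $l$ of the expansion: in Graber--Vakil localisation the piece of the curve mapping into the expansion is parametrised by a single moduli space of maps to the non-rigid target, which already packages all expansion lengths together; $l$ is not a discrete invariant of a fixed component. The correct indexing is purely by the splitting $(\beta_0,A_0 \mid \beta_\infty,A_\infty)$. Relatedly, the distinguished rational bridge always lands in the \emph{base} level of the target, not ``some level'', and the components over $X_\infty$ are not ``contracted to the singular loci between expansion levels'' --- they map non-trivially into the rubber. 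The extra normal-bundle factor $1/(z-\psi_\star)$ comes from the target psi class at the rubber attaching point $q_\infty$, not from a marking ``on the $X$-side''.

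More seriously, your dichotomy ``level zero $\Rightarrow \Lcal_X$, positive level $\Rightarrow$ $S$-kernel'' glosses over exactly the place where the bookkeeping is delicate: the degenerate splittings where one of the factor moduli spaces is unstable. The paper enumerates five such cases. The term $-z\cohid$ comes from $(\beta,n)=(0,0)$, the term $\tCoh{z}$ from $(\beta,n)=(0,1)$ with the marking on the $0$-side, and the insertion of $\qCoh{z}=\tCoh{z}-z\cohid$ as the first argument of the $S$-kernel comes from the two cases $(\beta_0,n_0)=(0,0)$ and $(\beta_0,n_0)=(0,1)$. None of these arises from ``the cotangent weight at the $\infty$-end of $C_0$ combining with the factor $(-z)$'' as you suggest; they arise because $\M{0}{1}{X}{0}$ and $\M{0}{2}{X}{0}$ do not exist, forcing those splittings to be handled by hand. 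Without this case analysis neither the dilaton shift in $\Lcal_X$ nor the leading identity term in $S_{\tCoh{z}}$ appears, and the final answer does not assemble.
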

\noindent The proof will be given in \S \ref{section Proof of the proposition}; for the moment let us explain the statement. We view $\ev_\infty$ as a map
\begin{equation*} \ev_\infty \colon \coprod_{\beta,n} \overline{\mathcal{M}}_{0,n,(1)}\left( (X\times\PP^1\ |\ X \times \infty), (\beta,1) \right) \longrightarrow X\times\infty=X \end{equation*}
so that the target of the push-forward $(\ev_\infty)_*$ is the equivariant cohomology of $X$ with respect to the trivial torus action. But this is just:
\begin{equation*} \HH^*(X) \otimes \Lambda[z] = \Hcal_+ \subseteq \Hcal \end{equation*}
On the other hand, $S(\Lcal_X)$ naturally lives inside the total space of the trivial bundle $\Hcal_+ \times \Hcal \to \Hcal_+$ (see the discussion at the end of \S \ref{subsection Fundamental solution matrix} above); therefore when we write $S(\Lcal_X)$ in equation \eqref{main equation}, we really mean its projection along $\pi_2 \colon \Hcal_+ \times \Hcal \to \Hcal$. Another way to say this is that for a fixed $\qCoh{z} \in \Hcal_+$, with dilaton-shifted co-ordinate $\tCoh{z}$, the push-forward of the left-hand side of \eqref{main equation} is equal to $S_{\tCoh{z}}(\Lcal_X|_{\qCoh{z}})$.

An immediate corollary of the above result is that $S(\Lcal_X) \subseteq z \cdot \Hcal_+$ rather than just $\Hcal$. For an application of this, as well as a deeper exploration of the ``hidden polynomiality'' arising from our construction, see \S \ref{subsection Variants and applications}.

\begin{remark} The total transform $S(\Lcal_X)$ has a geometric interpretation as a family of \emph{ancestor cones}; see \cite[Appendix 2]{CoatesGivental}.\end{remark}

\begin{remark} Notice that for any choice of $\beta$, the curve class $(\beta,1)$ is non-zero. Hence the sum in Proposition \ref{main result} is over \emph{all} $\beta$ and $n$. This is in contrast to the sum which appears in the definition of the Lagrangian cone in \S \ref{subsection Lagrangian cone}, which is only over the stable range, i.e. excludes the cases $(\beta,n) = (0,0)$ and $(0,1)$. This difference will become important during the proof of Proposition \ref{main result}. \end{remark}

\section{Proof of the main result} \label{section Proof of the proposition} We will assume that the reader is familiar with the space of relative stable maps, and in particular with the torus localisation formula, established in \cite{GraberVakil} whenever the divisor is fixed pointwise by the action (as is the case for us). We will write $X_0$ and $X_\infty$ for $X \times 0$ and $X \times \infty$, viewing them either as divisors in $X \times \PP^1$ or in $X[l]$, as appropriate.

\subsection{Identifying the fixed loci} \label{subsection Identifying the fixed loci} The proof proceeds by $\C^*$-localisation. The $\C^*$-fixed loci of the moduli space are indexed by graphs of the following form:\medskip
\begin{center}
\begin{tikzpicture}
\draw[fill=black] (-2,0) -- (2,0);
\draw[fill=black] (-2,0) circle[radius=2pt];
\draw (-2,-0.1) node[below]{$X_0$};
\draw (-1.9,0) node[above]{$\beta_0$};
\draw (-3,0.6) -- (-2,0);
\draw (-2.9,0.6) node[left]{$x_{i_1}$};
\draw (-3,-0.6) -- (-2,0);
\draw (-2.9,-0.6) node[left]{$x_{i_{n_0}}$};
\draw (-3.3,-0.2) circle[radius=0.7pt];
\draw (-3.3,0) circle[radius=0.7pt];
\draw (-3.3,0.2) circle[radius=0.7pt];

\draw[fill=black] (2,0) circle[radius=2pt];
\draw (2,-0.1) node[below]{$X_\infty$};
\draw (2,0) node[above]{$\beta_\infty$};
\draw (3,0.6) -- (2,0);
\draw (3,0.6) node[right]{$x_{j_1}$};
\draw (3,-0.6) -- (2,0);
\draw (3,-0.7) node[right]{$x_{j_{n_\infty}}$};
\draw (3.3,-0.2) circle[radius=0.7pt];
\draw (3.3,0) circle[radius=0.7pt];
\draw (3.3,0.2) circle[radius=0.7pt];
\draw (3,-1.1) -- (2,0);
\draw (3,-1.1) node[right]{$x_\infty$};
\end{tikzpicture}
\end{center}
These correspond to splittings of the source curve into three pieces: a piece $C_0$ which maps to $X_0$, a piece $C_\infty$ which maps to $X_\infty$ (and hence, in general, into the higher levels of the expanded degeneration), and a rational component joining $C_0$ and $C_\infty$, which maps isomorphically onto a $\PP^1$-fibre of $X \times \PP^1$. The marking $x_\infty$ always belongs to $C_\infty$ since it must map to the infinity divisor $X_\infty$. The other choices -- of degrees $\beta_0$ and $\beta_\infty$ for the two pieces, and of a partition $A_0 \sqcup A_\infty = \{ x_1,\ldots,x_n\}$ of the non-relative markings -- are free. The fixed locus corresponding to this data is isomorphic to
\begin{equation}\label{Fixed locus} \M{0}{A_0 \cup \{q_0\}}{X}{\beta_0} \times_X \overline{\mathcal{M}}_{0,A_\infty,(1),(1)}\left( X\times \PP^1\ |\ (X_0 + X_\infty),\beta_\infty\right)_{\sim} \end{equation}
with virtual fundamental class induced by the virtual classes of the two factors; this is part of the statement of the virtual localisation theorem in \cite{GraberVakil}. Here the second factor
\begin{equation*}\overline{\mathcal{M}}_{0,A_\infty,(1),(1)}\left( X\times \PP^1\ |\ (X_0 + X_\infty),\beta_\infty\right)_{\sim}\end{equation*}
is a moduli space of stable maps to the non-rigid target; see \cite[\S 2.4]{GraberVakil} for a detailed discussion of this space. The notation here is supposed to indicate that there is a set $A_\infty$ of non-relative markings (so $\# A_\infty=n_\infty$), a single marking $q_\infty$ mapping to $X_0$ with tangency $1$, and a single marking $x_\infty$ mapping to $X_\infty$ with tangency $1$.

The fibre product in \eqref{Fixed locus} is taken with respect to the evaluations at $q_0$ and $q_\infty$ on each side. The Euler class of the virtual normal bundle is equal \cite[Theorem~3.6 and Example~3.7]{GraberVakil} to
\begin{equation*} (-z)(-z-\psi_{q_0})(z-\psi_{q_\infty}) \end{equation*}
which obviously splits into a product of classes supported on the two factors. We should briefly explain these: $-z$ arises from the deformations of the map on the rational bridge, $-z-\psi_{q_0}$ arises from the smoothing of the node connecting the rational bridge to $C_0$ and $z-\psi_{q_\infty}$ is a target psi class, which arises from the smoothing of the target singularity connecting the level $0$ piece and the level $1$ piece of the expanded degeneration. Here we have used the identification of the target psi class with a multiple of the psi class on one of the relative markings \cite[Construction 5.1.17]{GathmannThesis}. The term arising from the smoothing of the node connecting the rational bridge to $C_\infty$ is cancelled out by the local obstruction at that node: see \cite[\S 3.8]{GraberVakil}. 

Note that for certain choices of $(\beta_0, A_0 \ | \ \beta_\infty, A_\infty)$ the moduli spaces which we have written down above do not exist, because the data defining them is not stable. In these degenerate cases, we still have fixed loci; it is simply that one (or both) of the factors becomes trivial. Hence we must deal with these separately. The possible situations are enumerated below.

\subsubsection*{Case 1: $(\beta,n)=(0,0)$} This is the maximally degenerate case. The fixed locus is just $X$, which has virtual codimension $0$; there is no virtual normal bundle.

\subsubsection*{Case 2: $(\beta,n)=(0,1)$ and $n_\infty=0$} In this case the fixed locus is again just $X$, with a single marked point $x_1$ mapped to $X_0$ and another marked point $x_\infty$ mapped to $X_\infty$ (there is no expansion of the target). The virtual codimension is $1$, and the Euler class of the virtual normal bundle is $-z$.

\subsubsection*{Case 3: $n \geq 1$ and $(\beta_0,n_0)=(0,0)$} In this case the fixed locus is a moduli space of relative maps to the non-rigid target, with $n+2$ marked points. The virtual codimension is $1$, and the virtual normal bundle contribution is $z-\psi_{q_\infty}$.

\subsubsection*{Case 4: $n \geq 1$ and $(\beta_0,n_0)=(0,1)$} Here the fixed locus is the same as the one in the previous case, but it now has virtual codimension $2$ because there is a marked point at the $X_0$ end of the rational bridge; the Euler class of the virtual normal bundle is $-z(z-\psi_{q_\infty})$.

\subsubsection*{Case 5: $n \geq 2$ and $(\beta_\infty,n_\infty)=(0,0)$} In this case the fixed locus is just the moduli space of stable maps to $X$ with $n+1$ markings. The virtual codimension is $2$, and the Euler class of the virtual normal bundle is $-z(-z-\psi_{q_0})$.

\subsection{Comparison lemma for psi classes} \label{subsection Lemma on psi classes} We now need to calculate the contributions to the push-forward from each of these fixed loci. A priori this is difficult, because the fixed loci involve moduli spaces of relative stable maps to the non-rigid target, which are in general hard to understand. However, in genus zero, a result of A. Gathmann says that these moduli spaces are in fact virtually birational to the underlying moduli spaces of stable maps to $X$. To be more precise: there is a projection map
\begin{equation*} \pi \colon \overline{\mathcal{M}}_{0,n_\infty,(1),(1)}\left( X\times \PP^1|(X_0 + X_\infty),\beta_\infty\right)_{\sim} \to \M{0}{n_\infty+2}{X}{\beta_\infty} \end{equation*}
induced by the collapsing map from the non-rigid target to $X$, and \cite[Theorem 5.2.7]{GathmannThesis} shows that this map respects the virtual classes:
\begin{equation*} \pi_* \virt{\overline{\mathcal{M}}_{0,n_\infty,(1),(1)}\left( X\times \PP^1|(X_0 + X_\infty),\beta_\infty\right)_{\sim}} = \virt{\M{0}{n_\infty+2}{X}{\beta_\infty}}. \end{equation*}
This result goes a long way towards making these invariants computable. However there is still a problem: the map $\pi$ may contract many components of the source curve, and hence does not in general preserve the psi classes. Consequently, descendant invariants (which certainly appear in our discussion) are still complicated to compute, because one has to keep track of how psi classes pull back. It turns out, however, that $X \times \PP^1$ is special in this respect.

\begin{lemma} \label{psi class lemma} The map $\pi$ cannot contract any component of the source curve which contains a marking.\end{lemma}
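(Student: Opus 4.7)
The plan is to argue by contradiction. Suppose some component $C_0 \subseteq C$ of the source curve contains a marking $p$ and is contracted by $\pi$. By definition this means that $\text{coll} \circ f$ is constant on $C_0$ (so $f(C_0)$ lies in a $\PP^1$-fiber $F$ of some level of the expanded non-rigid target), and moreover $C_0$ has fewer than three special points, so that stabilisation of $\text{coll} \circ f$ collapses it. Since $C$ is connected and contains both relative markings $q_\infty$ and $x_\infty$, $C_0$ cannot be all of $C$, so $C_0$ is attached to the rest of $C$ by at least one node $n$; the marking $p$ and the node $n$ are then exactly the two special points of $C_0$.

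Write $d$ for the degree of $f|_{C_0} \colon C_0 \to F \cong \PP^1$. Predeformability forces the preimages on $C_0$ of the two points in which $F$ meets the bounding divisors of its level to consist only of special points of $C_0$; each such preimage has total multiplicity $d$, and can only be accounted for by $p$ and $n$. I then split into two cases depending on the type of the marking.

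If $p$ is non-relative, then its image lies outside the bounding divisors of the level, so preimages of the two bounding points of $F$ can only involve the single node $n$; but $n$ can lie above at most one such point, forcing $d = 0$. Then $f|_{C_0}$ is constant and $C_0$ becomes an $f$-contracted component with only two special points, contradicting the standard three-point stability condition for stable maps.

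If instead $p$ is relative, say $p = q_\infty$ with tangency $1$ along $X_0$ (the case of $x_\infty$ being symmetric), then $F$ must meet $X_0$, which forces $F$ to lie in the leftmost level of the expanded target with $p$ over the $0$-end of $F$. Because nodes cannot map into the relative divisor $X_0$, the node $n$ must lie either above the $\infty$-end of $F$ or over a smooth point of the level; a short degree argument eliminates the latter and forces $d = 1$, so that $f|_{C_0} \colon C_0 \xrightarrow{\sim} F$ is an isomorphism sending $p$ and $n$ to the two fixed points of the $\C^*$-rescaling action on $F$. For any $\lambda$ in the $\C^*$ of this level, a compensating automorphism of $C_0 \cong \PP^1$ fixing its two special points (extended as the identity on the remaining components of $C$) then restores the map, producing a $\C^*$-family of automorphisms of the stable map and contradicting the required finiteness of the automorphism group in the non-rigid moduli space. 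The main subtlety, and the essential geometric input, is this last stability argument in the relative case; the non-relative case reduces immediately to the predeformability degree constraint.
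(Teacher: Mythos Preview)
Your non-relative case is fine and essentially matches the paper's: once you know $f|_{C_0}$ is non-constant, the two special points must sit over the two boundary divisors, and a non-relative marking cannot do that.

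In the relative case there is a genuine gap. Your $\C^*$-family of automorphisms---acting by $\lambda$ on the target level containing $C_0$, compensating by an automorphism of $C_0$, and extending by the \emph{identity} on every other component of $C$---only restores the map if $C_0$ is the \emph{only} component mapping into that level. You never verify this. If some other component $C''$ lay in the same level, the target rescaling $\lambda$ would move $f(C'')$ while your source automorphism fixes $C''$ pointwise, so the map is not restored and no contradiction with stability follows.

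The paper's argument in the relative case is essentially the missing step run in the other direction. Rather than exhibiting instability, it invokes stability of the non-rigid map to force the existence of a second component in the same level, and then uses the key geometric fact (that the target is the global product $X\times\PP^1$) to conclude that any such component must meet $X_0$. This contradicts predeformability, since $q_\infty$ is the unique preimage of the relative divisor $X_0$. Your proof can be repaired by inserting exactly this observation: any other component in $C_0$'s level with positive $\PP^1$-degree would meet $X_0$, and any component with $\PP^1$-degree zero cannot be connected to $C_0$ without passing through such a component; hence $C_0$ is alone in its level and your automorphism construction goes through. But this repair \emph{is} the paper's argument, so your approach does not genuinely avoid it. (Incidentally, the deduction $d=1$ is correct but unnecessary: for any $d$ the totally ramified cover $z\mapsto z^d$ is $\C^*$-equivariant, so the automorphism argument works regardless.)
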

\begin{proof} The components contracted by $\pi$ are those with two or fewer special points which are mapped into a fibre of $P=X \times \PP^1$ over $X$. Let $C^\prime$ be such a component. Since it has two or fewer special points, the map $f$ must be non-constant on $C^\prime$ (by stability), and hence there is at least one point of $C^\prime$ which maps to $X_\infty$ and at least one point which maps to $X_0$. Thus, $C^\prime$ contains exactly two special points, which must map to the special divisors of the non-rigid target.

Now suppose for a contradiction that some marking $x_i$ belongs to $C^\prime$. If $x_i$ is a non-relative marking then we immediately arrive at a contradiction, since such a marking cannot map into any special divisor. Otherwise, $x_i = q_\infty$ or $x_\infty$ and so is mapped into $X_0$ or $X_\infty$, respectively; without loss of generality we may suppose $x_i = q_\infty$. By the stability condition for relative stable maps, there must exist some other component of the source curve which maps with positive degree into the same level of the non-rigid target as $C^\prime$. But this would necessarily touch $X_0$, which is a contradiction since $q_\infty$ is the only point of the source curve which is allowed to map to $X_0$ (here we are using the fact that $X \times \PP^1$ is a global product; for non-trivial $\PP^1$-bundles over $X$, it is no longer true that a component of the source curve which touches $X_\infty$ must also touch $X_0$).\end{proof}

\begin{corollary} $\pi^* \psi_i = \psi_i$ for any $i \in \{1,\ldots,n_\infty+2\}$. Thus, we can identify any non-rigid invariant of $(X \times \PP^1,X_0 + X_\infty)$ with the corresponding invariant of $X$. \end{corollary}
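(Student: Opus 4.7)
The key idea is the standard comparison between psi classes on the source and target of a map induced by stabilisation of curves: such a pullback of $\psi_i$ differs from $\psi_i$ only by a correction class supported on the locus where the component carrying the $i$th marking is contracted. Lemma \ref{psi class lemma} tells us that this locus is empty, so there is nothing to correct.

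More concretely, I would argue as follows. Write $\Pi \colon \mathcal{C}' \to \mathcal{C}$ for the map between universal curves covering $\pi$, where $\mathcal{C}'$ is the universal source over the non-rigid moduli space and $\mathcal{C}$ is the universal source over $\overline{\mathcal{M}}_{0,n_\infty+2}(X,\beta_\infty)$. Let $\sigma_i \colon \overline{\mathcal{M}}_{0,n_\infty,(1),(1)}(X \times \PP^1 | X_0 + X_\infty, \beta_\infty)_{\sim} \to \mathcal{C}'$ be the $i$th marked section; the composite $\Pi \circ \sigma_i$ equals the pullback of the $i$th marked section of $\mathcal{C}$. By definition, $\psi_i$ (on either moduli space) is the first Chern class of the pullback of the relative dualising sheaf along $\sigma_i$, and $\pi^*\psi_i$ is the first Chern class of $\sigma_i^*\Pi^*\omega_{\mathcal{C}/\overline{\mathcal{M}}}$. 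These two line bundles differ by the relative cotangent sheaf of $\Pi$ along $\sigma_i$, and this contribution is supported exactly on the locus where $\Pi$ fails to be an isomorphism in a neighbourhood of $\sigma_i$, i.e.\ where the component bearing $x_i$ is contracted by $\pi$. By Lemma \ref{psi class lemma} this locus is empty, so $\pi^*\psi_i = \psi_i$.

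The second assertion is then immediate from the projection formula combined with Gathmann's virtual push-forward theorem \cite[Theorem~5.2.7]{GathmannThesis}: any descendant integral on the non-rigid moduli space involves insertions of the form $\ev_i^*(\gamma_i) \cdot \psi_i^{k_i}$, and each such factor is visibly the pullback of the corresponding class on $\overline{\mathcal{M}}_{0,n_\infty+2}(X,\beta_\infty)$ (since $\ev_i$ on the non-rigid space obviously factors through $\pi$). Pushing forward along $\pi$ then turns the virtual class of the non-rigid space into that of the stable-map space, identifying the two invariants.

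The only non-routine point is justifying that the correction class in the comparison of psi classes is genuinely supported on the contraction locus rather than some larger set; this is standard (cf.\ the analogous statement for forgetful morphisms $\overline{\mathcal{M}}_{g,n+1} \to \overline{\mathcal{M}}_{g,n}$), but it is the only step where one must be careful about what happens at the boundary of the moduli space.
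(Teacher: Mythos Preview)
Your proposal is correct and is precisely the argument the paper has in mind: the Corollary is stated without proof immediately after Lemma~\ref{psi class lemma}, so the paper is treating it as an immediate consequence of that Lemma together with Gathmann's virtual push-forward theorem, and you have simply spelled out the standard details (the psi-class comparison supported on the contraction locus, and the projection formula). There is nothing to correct.
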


\subsection{Calculating the contributions} We are now in a position to calculate the contributions to the push-forward. We fix $(\beta,n)$ and look at the fixed loci of the corresponding moduli space. Ignoring the degenerate cases for the moment, we must sum over \emph{stable splittings} $(\beta_0,A_0 \ | \ \beta_\infty,A_\infty)$ of $(\beta,n)$. We may phrase this as summing over splittings $(\beta_0,\beta_\infty)$ of $\beta$ and $(n_0,n_\infty)$ of $n$, with a factor of ${n\choose n_0} = {n \choose n_\infty}$ introduced to account for the choice of which marked points to put in $A_0$ and which to put in $A_\infty$. Thus the contribution
\begin{align*} \dfrac{Q^\beta}{n!} &  (\ev_\infty)_* \left( (-z) \cdot\prod_{i=1}^n \ev_i^*(\tCoh{\psi_i}) \right) \end{align*}
from the non-degenerate loci is equal to:
\begin{align*}&\ \ \ \ \dfrac{Q^\beta}{n!} \sum_{\substack{\beta_0+\beta_\infty=\beta \\ n_0+n_\infty=n}} {n \choose n_\infty} \langle \tCoh{\psi_1},\ldots,\tCoh{\psi_{n_0}}, \left(\dfrac{\varphi_\alpha}{-z-\psi_{q_0}}\right) \rangle^X_{0,n_0+1,\beta_0} \cdot \\
& \qquad\qquad\qquad \ \ \ \ \ \ \ \ \langle \left(\dfrac{\varphi^\alpha}{z-\psi_{q_\infty}}\right), \tCoh{\psi_1},\ldots,\tCoh{\psi_{n_\infty}}, \varphi_\gamma \rangle^X_{0,n_\infty+2,\beta_\infty} \cdot \varphi^\gamma \\
= & \sum_{\substack{\beta_0+\beta_\infty=\beta \\ n_0+n_\infty=n}} \left( \dfrac{Q^{\beta_0}}{n_0!} \langle \tCoh{\psi_1},\ldots,\tCoh{\psi_{n_0}}, \left(\dfrac{\varphi_\alpha}{-z-\psi_{q_0}}\right) \rangle^X_{0,n_0+1,\beta_0}\right) \cdot \\
& \qquad\qquad\qquad\ \ \ \ \ \ \ \left( \dfrac{Q^{\beta_\infty}}{n_\infty!}\langle \left(\dfrac{\varphi^\alpha}{z-\psi_{q_\infty}}\right), \tCoh{\psi_1},\ldots,\tCoh{\psi_{n_\infty}}, \varphi_\gamma \rangle^X_{0,n_\infty+2,\beta_\infty} \cdot \varphi^\gamma \right).
\end{align*}
There are also the contributions from the degenerate fixed loci, enumerated in \S \ref{subsection Identifying the fixed loci} above. We  now calculate these.

\subsubsection*{Case 1: $(\beta,n)=(0,0)$} This gives a single contribution, which is:
\begin{equation*} -z (\ev_\infty)_* ( \cohid) = -z \cohid. \end{equation*}

\subsubsection*{Case 2: $(\beta,n)=(0,1)$ and $n_\infty=0$} This also gives a single contribution, which is
\begin{equation*} (\ev_\infty)_* ( \ev_1^* \tCoh{\psi_1} ) = \tCoh{z} \end{equation*}
here we have used the fact that the psi class $\psi_1$ restricts to a trivial class on the fixed locus with non-trivial weight $z$, so the equivariant class $\psi_1$ gets identified with $z$.

\subsubsection*{Case 3: $n \geq 1$ and $(\beta_0,n_0)=(0,0)$} Here we get a contribution for each $(\beta,n)$ with $n \geq 1$. The contribution is:
\begin{equation*} \dfrac{Q^{\beta_\infty}}{n_\infty!} \langle \left(\dfrac{-z\cohid}{z-\psi_{q_\infty}} \right), \tCoh{\psi_1},\ldots,\tCoh{\psi_{n_\infty}}, \varphi_\gamma \rangle^X_{0,n_\infty+2,\beta} \cdot \varphi^\gamma . \end{equation*}

\subsubsection*{Case 4: $n \geq 1$ and $(\beta_0,n_0)=(0,1)$} We get a contribution for each $(\beta,n)$ with $n \geq 1$, and the contribution is
\begin{equation*} \dfrac{Q^{\beta_\infty}}{n_\infty!} \langle \left(\dfrac{\tCoh{z}}{z-\psi_{q_\infty}} \right), \tCoh{\psi_1},\ldots,\tCoh{\psi_{n_\infty}}, \varphi_\gamma \rangle^X_{0,n_\infty+2,\beta_\infty} \cdot \varphi^\gamma \end{equation*}
where again we have used the fact that the class $\psi_0$ restricts to the pure weight class $z$ on the fixed locus.

\subsubsection*{Case 5: $n \geq 2$ and $(\beta_\infty,n_\infty)=(0,0)$} Here we get a contribution for each $(\beta,n)$ with $n \geq 2$, and the contribution is:
\begin{equation*} \dfrac{Q^{\beta_0}}{n_0!} \langle \tCoh{\psi_1},\ldots,\tCoh{\psi_{n_0}}, \left( \dfrac{\varphi_\gamma}{-z-\psi_{q_0}} \right) \rangle^X_{0,n_0+1,\beta_0} \cdot \varphi^\gamma . \end{equation*}

\subsection{Putting everything together}
If we sum together all the terms computed in the previous section, we obtain:
\begin{align*} \left( \tCoh{z} - z\cohid \right) + & \sum_{\beta_0,n_0} \dfrac{Q^{\beta_0}}{n_0!} \langle \tCoh{\psi_1},\ldots,\tCoh{\psi_{n_0}}, \left( \dfrac{\varphi_\alpha}{-z-\psi_{q_0}} \right) \rangle^X_{0,n_0+1,\beta_0} \cdot \varphi^\alpha \\
+ & \left( \sum_{\beta_0,n_0} \dfrac{Q^{\beta_0}}{n_0!} \langle \tCoh{\psi_1},\ldots,\tCoh{\psi_{n_0}}, \left( \dfrac{\varphi_\alpha}{-z-\psi_{q_0}} \right) \rangle^X_{0,n_0+1,\beta_0} \right) \cdot \\
& \left( \sum_{\beta_\infty,n_\infty} \dfrac{Q^{\beta_\infty}}{n_\infty!} \langle \left( \dfrac{\varphi^\alpha}{z-\psi_{q_\infty}} \right) , \tCoh{\psi_1},\ldots,\tCoh{\psi_{n_\infty}}, \varphi_\gamma \rangle^X_{0,n_\infty+2,\beta_\infty}  \cdot \varphi^\gamma \right) \\
+ & \sum_{\beta_\infty,n_\infty} \dfrac{Q^{\beta_\infty}}{n_\infty!} \langle \left(\dfrac{\tCoh{z}-z\cohid}{z-\psi_{q_\infty}} \right), \tCoh{\psi_1},\ldots,\tCoh{\psi_n}, \varphi_\gamma \rangle^X_{0,n_\infty+2,\beta_\infty} \cdot \varphi^\gamma . \end{align*}
Using $\qCoh{z} = \tCoh{z} - z\cohid$ and grouping the final two terms together, we see that this is equal to:
\begin{align*} \qCoh{z} + & \sum_{\beta_0,n_0} \dfrac{Q^{\beta_0}}{n_0!} \langle \tCoh{\psi_1},\ldots,\tCoh{\psi_{n_0}}, \left( \dfrac{\varphi_\alpha}{-z-\psi_{q_0}} \right) \rangle^X_{0,n_0+1,\beta_0} \cdot \varphi^\alpha \\
& + \sum_{\beta_\infty,n_\infty} \dfrac{Q^{\beta_\infty}}{n_\infty!} \Bigg\langle \dfrac{1}{z-\psi_{q_\infty}}\cdot \left( \qCoh{z} + \sum_{\beta_0,n_0} \dfrac{Q^{\beta_0}}{n_0!} \langle \tCoh{\psi_1},\ldots,\tCoh{\psi_{n_0}}, \left( \dfrac{\varphi_\alpha}{-z-\psi_{q_0}} \right) \rangle^X_{0,n_0+1,\beta_0} \cdot \varphi^\alpha \right), \\
& \qquad \qquad  \qquad \qquad \qquad \qquad \qquad \tCoh{\psi_1},\ldots,\tCoh{\psi_{n_\infty}}, \varphi_\gamma \Bigg\rangle^X_{0,n_\infty+2,\beta_\infty} \cdot \varphi^\gamma . \end{align*}
But this is equal to
\begin{align*} & \Lcal_X|_{\qCoh{z}} + \sum_{\beta_\infty,n_\infty} \dfrac{Q^{\beta_\infty}}{n_\infty!} \langle \left( \dfrac{\Lcal_X|_{\qCoh{z}}}{z-\psi_{q_\infty}} \right), \tCoh{\psi_1},\ldots,\tCoh{\psi_{n_\infty}}, \varphi_\gamma \rangle^X_{0,n_\infty+2,\beta_\infty} \cdot \varphi^\gamma = S(\Lcal_X)|_{\qCoh{z}}.  \end{align*}
as claimed. This completes the proof of Proposition \ref{main result}. 

\begin{remark} It is perhaps worth comparing our computation to the computation carried out in \cite{CoatesLagrangianConeS1}. There, the moduli space under consideration is the space of ordinary stable maps to $X \times \PP^1$;  Coates restricts to an open substack of this space, consisting of stable maps such that only a single point of the curve is mapped to $X_\infty$. He then applies torus localisation and pushes forward from the (proper) fixed loci. From our point of view, the loci from which he pushes forward are the degenerate loci which appear as Case 5 in \S \ref{subsection Identifying the fixed loci} above. The special cases which he calls Case 2 and Case 3 are what we call Case 2 and Case 1, respectively. Our non-special case, which contributes a product of invariants from stable maps to $X$ and stable maps to the non-rigid target, does not appear in his setting; nor do our special cases 3 and 4. \end{remark}

\section{Variants and applications} \label{subsection Variants and applications} Since an equivariant push-forward must take values in $\HH^*(X) \otimes \Lambda[z] = \Hcal_+$, an immediate consequence of Proposition \ref{main result} is the following:
\begin{theorem} \label{polynomiality for S and L} $S(\Lcal_X) \subseteq z \cdot \Hcal_+$.\end{theorem}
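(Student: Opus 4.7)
The plan is to deduce the theorem directly from Proposition \ref{main result} by tracking the explicit factor of $-z$ in the integrand $\Theta_{\beta,n}(\tCoh{z})$. The paper's introductory observation shows that an equivariant push-forward from the moduli space to $X$ (with trivial $\Cstar$-action on the target) lands in $\HH^*(X) \otimes \Lambda[z] = \Hcal_+$; this already gives $S(\Lcal_X) \subseteq \Hcal_+$. To upgrade to $z \cdot \Hcal_+$, I would exploit the factorisation
\begin{equation*} \Theta_{\beta,n}(\tCoh{z}) = (-z) \cdot \prod_{i=1}^n \ev_i^*(\tCoh{\psi_i}) \end{equation*}
from the definition.

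First I would observe that $z$ is a scalar in the equivariant cohomology ring (the image of the equivariant parameter under pullback from a point), and therefore commutes with push-forward. Consequently
\begin{equation*} (\ev_\infty)_*\!\left( \sum_{\beta,n} \dfrac{Q^\beta}{n!} \Theta_{\beta,n}(\tCoh{z}) \right) = (-z) \cdot (\ev_\infty)_*\!\left( \sum_{\beta,n} \dfrac{Q^\beta}{n!} \prod_{i=1}^n \ev_i^*(\tCoh{\psi_i}) \right). \end{equation*}
Next I would note that the inner push-forward, being a $\Cstar$-equivariant class on $X$ with trivial action, lies in $\HH^*(X) \otimes \Lambda[z] = \Hcal_+$ by the same reason recalled above. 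Multiplying by $-z$ yields an element of $z \cdot \Hcal_+$. Combined with Proposition \ref{main result}, this identifies $S(\Lcal_X)|_{\qCoh{z}}$ as an element of $z \cdot \Hcal_+$ for every $\qCoh{z} \in \Hcal_+$, which is the claim.

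There is essentially no obstacle here; the content is conceptual rather than computational. The only mild subtlety worth spelling out is that the factor $\prod_i \ev_i^*(\tCoh{\psi_i})$ is a well-defined equivariant class (each $\tCoh{z}$ is a polynomial in $z$ with $\HH^*(X)$-coefficients, and each $\psi_i$ is an equivariant psi class on the moduli space), so that its push-forward is a genuine element of the non-localised equivariant cohomology of $X$, not merely of its localisation. This non-localisation is precisely the ``hidden polynomiality'' that the paper goes on to exploit in \S \ref{subsection Variants and applications}.
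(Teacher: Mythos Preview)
Your argument is correct and is exactly the approach the paper takes: Theorem~\ref{polynomiality for S and L} is stated there as an ``immediate consequence'' of Proposition~\ref{main result}, using that an equivariant push-forward to $X$ lands in $\Hcal_+$. You have simply made explicit the one detail the paper leaves implicit, namely that the extra factor of $z$ comes from the $(-z)$ in the definition of $\Theta_{\beta,n}$ together with the fact that $z$ commutes with push-forward.
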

\noindent This is somewhat surprising, since a priori we only know that $S(\Lcal_X) \subseteq \Hcal$, and indeed both $S$ and $\Lcal_X$ involve many non-positive powers of $z$. What Theorem \ref{polynomiality for S and L} says is that the coefficients of these non-positive powers cancel out when we take $S(\Lcal_X)$; this translates into a sequence of universal relations for the Gromov--Witten invariants. Calculating the coefficients of $z^{-k}$ explicitly, we obtain for $k \geq 2$ and $\qCoh{z} \in \Hcal_+$
\begin{align*} \bigg( \langle\!\langle \psi_1^{k-1} \qCoh{\psi_1}, & \varphi_\alpha  \rangle\!\rangle^X_{0,2} \ + \  (-1)^k \langle\!\langle \varphi_\alpha\psi_1^{k-1} \rangle\!\rangle^X_{0,1} + \\
& \sum_{r=0}^{k-2} (-1)^{1+r} \langle\!\langle \varphi_\gamma \psi_1^r \rangle\!\rangle^X_{0,1} \cdot \langle\!\langle \varphi^\gamma \psi_1^{k-2-r},\varphi_\alpha \rangle\!\rangle^X_{0,2} \bigg) (\tCoh{\psi}) \cdot \varphi^\alpha =0\end{align*}
where we have used the correlator notation:
\begin{equation*} \langle\!\langle \varphi_{\alpha_1}\psi_1^{k_1},\ldots,\varphi_{\alpha_r}\psi_r^{k_r} \rangle\!\rangle^X_{0,r}(\tCoh{\psi}) := \sum_{\beta,n} \dfrac{Q^\beta}{n!} \langle \varphi_{\alpha_1}\psi_1^{k_1},\ldots,\varphi_{\alpha_r}\psi_r^{k_r} , \tCoh{\psi_{r+1}},\ldots,\tCoh{\psi_{r+n}} \rangle^X_{0,n+r,\beta}. \end{equation*}
These equations appear to be equivalent to the reconstruction relation \cite[Equation (2)]{LeePandharipande}, combined with the dilaton equation.

\begin{remark} Theorem \ref{polynomiality for S and L} can be viewed as a generalisation of one of the fundamental results in the quantisation formalism, namely that the $J$-function is inverse to the fundamental solution matrix; see Remark \ref{comparison with t=tau case} below.
\end{remark}

In this section we will now extend the above line of argument, exploiting the ``hidden polynomiality'' implicit in our construction. We obtain new proofs and generalisations of several foundational results concerning both the fundamental solution matrix and the Lagrangian cone.

\subsection{The fundamental solution matrix and its adjoint} Looking at the definition given in \S \ref{subsection Fundamental solution matrix}, we see that we can regard $S_{\tCoh{z}}$ as a power series in $z^{-1}$ with coefficients in $\operatorname{End}(\HH^*(X))$:
\begin{equation*} S_{\tCoh{z}} \in \operatorname{End}(\HH^*(X))\llbracket z^{-1} \rrbracket .\end{equation*}
We will write $S_{\tCoh{z}}(z)$ to emphasise this point of view. The adjoint ${S_{\tCoh{z}}}^*(z)$ is defined by taking the adjoints, term-by-term, of the coefficients of $S_{\tCoh{z}}(z)$ (with respect to the Poincar\'e pairing on $\HH^*(X)$). It is easy to check that, for $v \in \HH^*(X)$:
\begin{equation} \label{S* equation} {S_{\tCoh{z}}}^*(z)(v) = v + \sum_{\beta,n} \dfrac{Q^\beta}{n!} \langle v , \tCoh{\psi_1}, \ldots, \tCoh{\psi_n} , \left( \dfrac{\varphi_\alpha}{z-\psi} \right) \rangle^X_{0,n+2,\beta} \cdot \varphi^\alpha . \end{equation}
An important feature of the theory \cite{GiventalEquivariant} is that when $\tCoh{z}=\tau$, the operators $S_\tau(z)$ and ${S_\tau}^*(-z)$ are inverse to each other; this is in fact equivalent to the statement that $S_\tau(z)$ is a symplectomorphism \cite[\S 3.1]{CladerPriddisShoemaker}. We now generalise this fact to arbitrary $\tCoh{z}$, based on a slight modification of the construction used in Proposition \ref{main result}.
\begin{proposition} \label{fsm transpose inverse} ${S_{\tCoh{z}}}^*(-z) = S_{\tCoh{z}}(z)^{-1}$.\end{proposition}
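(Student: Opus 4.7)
The plan is to mimic the proof of Proposition~\ref{main result} on a moduli space modified so as to produce the composition $S_{\tCoh z}(z)\circ S^*_{\tCoh z}(-z)$ under $\C^*$-localisation, and then to exploit the resulting polynomiality to force this composition to be the identity. Specifically, for a fixed $v\in\HH^*(X)$, consider
\[
\overline{\mathcal{M}}_{0,n,(1),(1)}\bigl(X\times\PP^1 \,\big|\, X_0+X_\infty,(\beta,1)\bigr),
\]
obtained from the moduli space of Proposition~\ref{main result} by also treating $X_0$ as relative and adding there a relative marking $q_0$ of tangency~$1$ (while retaining $x_\infty$ of tangency~$1$ at $X_\infty$). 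Push forward along $\ev_\infty$ the class
\[
\Xi^v_{\beta,n}(\tCoh z) \;=\; (-z)\cdot\ev_{q_0}^*(v)\cdot\prod_{i=1}^n\ev_i^*(\tCoh{\psi_i}),
\]
weighted by $Q^\beta/n!$ and summed over all $(\beta,n)$. As in the main result, properness of $\ev_\infty$ and triviality of the target $\C^*$-action force the answer into $\HH^*(X)\otimes\Lambda[z]\subseteq\Hcal_+$, i.e.\ it is polynomial in $z$.

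The fixed-locus decomposition is essentially identical to that of Section~\ref{subsection Identifying the fixed loci}: the new relative marking $q_0$ and its $v$-insertion sit on the $C_0$-side, and the virtual normal bundle factorises as $(-z)(-z-\psi_{p_0})(z-\psi_{p_\infty})$ exactly as before. The calculation of Section~\ref{subsection Calculating the contributions} then runs verbatim except that the $C_0$-side sum over $(\beta_0,n_0)$ now produces, via equation~\eqref{S* equation}, exactly $S^*_{\tCoh z}(-z)(v)$ rather than a piece of the Lagrangian cone, while the $C_\infty$-side sum applies $S_{\tCoh z}(z)$ to the glued class, once Lemma~\ref{psi class lemma} is invoked to identify the non-rigid $C_\infty$-correlators with ordinary ones on $X$. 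The degenerate strata, the analogues of Cases~1--5 of Section~\ref{subsection Identifying the fixed loci}, contribute the remaining corrections needed to pass from bare correlator sums to full operators, and after assembling everything the total push-forward equals $S_{\tCoh z}(z)\bigl(S^*_{\tCoh z}(-z)(v)\bigr)$.

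Since both $S_{\tCoh z}(z)$ and $S^*_{\tCoh z}(-z)$ are of the form $\id+O(z^{-1})$, the composition $S_{\tCoh z}(z)S^*_{\tCoh z}(-z)(v)$ equals $v$ plus terms of strictly negative $z$-degree; but the push-forward is polynomial in $z$, so these negative-degree terms must vanish and no positive-degree terms can appear, forcing the whole expression to equal $v$. Since $v\in\HH^*(X)$ was arbitrary, $S_{\tCoh z}(z)\circ S^*_{\tCoh z}(-z)=\id$, which is equivalent to the proposition.

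The main technical obstacle, as in the proof of Proposition~\ref{main result}, will be the bookkeeping of the degenerate fixed strata: one must verify that the insertion $\ev_{q_0}^*(v)$ at the new relative marking interacts correctly with each collapse of $C_0$ or the rational bridge, so as to reproduce exactly the corrections predicted by $S(z)S^*(-z)=\id$. A secondary check is to confirm that the sign conventions of the virtual normal bundle, in particular the factor $(-z-\psi_{p_0})^{-1}$ on the $C_0$-side, yield precisely the operator $S^*_{\tCoh z}(-z)$ with argument $-z$ (rather than $+z$ or the non-adjoint $S$).
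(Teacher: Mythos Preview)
Your approach is essentially identical to the paper's: localise on the doubly-relative moduli space, identify the push-forward as $S_{\tCoh z}(z)\circ S^*_{\tCoh z}(-z)(v)$, and combine polynomiality in $z$ with the fact that the composition lies in $\HH^*(X)\llbracket z^{-1}\rrbracket$ to force it to be constant and equal to $v$.

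There is one slip worth flagging. You carry over the factor $(-z)$ from the integrand of Proposition~\ref{main result}, but in the present setting it should be dropped: the paper pushes forward $\ev_0^*(v)\cdot\prod_i\ev_i^*(\tCoh{\psi_i})$, with no $(-z)$. The reason is that the maximally degenerate locus (the bare bridge with $q_0$ and $x_\infty$) must contribute $v$, the identity term of $S(z)S^*(-z)(v)$; with your extra $(-z)$ it would instead contribute $-zv$. As written, your push-forward is $(-z)\,S_{\tCoh z}(z)S^*_{\tCoh z}(-z)(v)$, and polynomiality of this only yields $S_{\tCoh z}(z)S^*_{\tCoh z}(-z)(v)\in z^{-1}\HH^*(X)\oplus\HH^*(X)$, i.e.\ $v+cz^{-1}$ for some $c$, which does not close the argument. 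Simply removing the $(-z)$ fixes this. (Relatedly, your assertion that the virtual normal bundle is ``exactly as before'' is not quite accurate: on the $C_0$-side the node contribution is now a target psi class rather than a source psi class, though Lemma~\ref{psi class lemma} again identifies it with the relevant $\psi$.)

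One further small point: having shown $S_{\tCoh z}(z)\circ S^*_{\tCoh z}(-z)=\id$, you should say why this gives the two-sided inverse. The paper observes that both operators are matrices over the field $\Lambda((z^{-1}))$, so a one-sided inverse is automatically two-sided.
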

\begin{proof} We first note that it is sufficient to prove:
\begin{equation} \label{equation one composition} S_{\tCoh{z}}(z)\circ {S_{\tCoh{z}}}^*(-z) = \operatorname{Id}_{\HH^*(X)} . \end{equation}
Indeed, the operators $S_{\tCoh{z}}(z)$ and ${S_{\tCoh{z}}}^*(-z)$ can  be viewed as finite-dimensional matrices over the field of Laurent series $\Lambda((z^{-1}))$. If \eqref{equation one composition} holds then both these matrices have maximal rank, and therefore we also have:
\begin{equation*} {S_{\tCoh{z}}}^*(-z) \circ S_{\tCoh{z}}(z) = \operatorname{Id}_{\HH^*(X)}.\end{equation*}
Thus it remains to show \eqref{equation one composition}. We consider the following moduli space
\begin{equation*} \overline{\mathcal{M}}_{0,n,(1),(1)}\left( (X\times\PP^1 \ | \ X_0 + X_\infty), (\beta,1) \right) \end{equation*}
which has a single marked point $x_0$ mapping to $X_0$, a single marked point $x_\infty$ mapping to $X_\infty$, and a collection of other markings $x_1,\ldots,x_n$ which carry no tangency conditions.

Since the divisor is now disconnected, we must be slightly careful about what we mean by the space above. For our purposes, the allowed automorphisms act separately on the fibres of the expanded degeneration over $X_0$ and $X_\infty$. The stability condition is also imposed separately. As such, each expansion is now indexed by two integers, $l_0$ and $l_\infty$, giving the lengths of the expansion over $X_0$ and $X_\infty$ respectively. This is close to the approach taken in \cite{FaberPandharipandeRelative}. One can view this moduli space as the fibre product:
\begin{equation*} \overline{\mathcal{M}}_{0,n+1,(1)}\left( X\times\PP^1|X_0 , (\beta,1) \right) \times_{\M{0}{n+2}{X\times\PP^1}{(\beta,1)}} \overline{\mathcal{M}}_{0,n+1,(1)}\left( X\times\PP^1|X_\infty , (\beta,1) \right).\end{equation*}
Taking the definition this way ensures that, when we localise, the fixed loci are fibre products of moduli spaces of relative stable maps to the non-rigid target. Furthermore since the stability condition is imposed separately over $X_0$ and $X_\infty$, the proof of Lemma \ref{psi class lemma} still applies. An analogous computation to the one given in \S \ref{section Proof of the proposition} then shows that, for $v \in \HH^*(X)$:
\begin{equation*} (\ev_\infty)_* \left( \sum_{\beta,n} \dfrac{Q^\beta}{n!} \cdot \ev_0^*(v) \cdot \prod_{i=1}^n \ev_i^* \tCoh{\psi_i} \right) = S_{\tCoh{z}}(z) \left( {S_{\tCoh{z}}}^*(-z) (v) \right). \end{equation*}
Since this is an equivariant push-forward, we see that $S_{\tCoh{z}}(z) \circ {S_{\tCoh{z}}}^*(-z)$ is a polynomial in $z$ with coefficients in $\operatorname{End}(\HH^*(X))$. On the other hand it is obvious from the definitions that it is also a power series in $z^{-1}$. Thus $S_{\tCoh{z}}(z) \circ {S_{\tCoh{z}}}^*(-z)$ is constant in $z$, and since the constant term is clearly the identity this completes the proof. \end{proof}

\begin{remark} \label{comparison with t=tau case} As noted previously, Proposition \ref{fsm transpose inverse} is a generalisation of the following fundamental fact for $\tau \in \HH^*(X)$:
\begin{equation*} {S_\tau}^*(-z) = S_\tau(z)^{-1}.\end{equation*}
I would like to thank M. Shoemaker for pointing out that one can also view Theorem \ref{polynomiality for S and L} as a generalisation of this result. Indeed, when $\tCoh{z}=\tau$ we can use the string equation to show that
\begin{equation}\label{L is S*} \Lcal_X|_{\qCoh{z}} = {S_{\tau}}^*(-z)(-z) \end{equation}
where $\qCoh{z}=\tau-z$. Thus we find:
\begin{equation*} S(\Lcal_X)|_{\qCoh{z}} = S_\tau(\Lcal_X|_{\qCoh{z}}) = S_\tau(z) \circ {S_\tau}^*(-z)(-z) = -z \in z \cdot \Hcal_+ . \end{equation*}
Our result can be viewed as a generalisation of this to arbitrary $\tCoh{z}$. The original proof does not apply in this more general setting, because it relies on an application of the string equation which produces additional unwanted terms when $\tCoh{z}$ involves higher powers of $z$. In particular, the identification \eqref{L is S*} no longer holds, which explains why we end up with two different generalisations.\end{remark}

\subsection{Properties of the Lagrangian cone} Here we reprove two fundamental facts concerning the Lagrangian cone. First, we modify the previous construction to give a concrete proof that $\Lcal_X$ is Lagrangian (though it should be noted that this also follows from the general fact that the graph of any closed $1$-form is Lagrangian).

\begin{proposition} \label{prop L a cone} $\Lcal_X$ is Lagrangian. \end{proposition}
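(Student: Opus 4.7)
Since $\Lcal_X$ is by definition the graph of the $1$-form $\mathrm{d}\Fcal^0_X$ over $\Hcal_+$, which is itself Lagrangian in $\Hcal$, it already has the correct ``semi-infinite dimension''; the content of the claim is therefore isotropy, namely $\Omega(v,w)=0$ for all $v,w \in T\Lcal_X$. The plan is to verify this by a direct residue computation in Darboux coordinates, reducing in the end to the standard permutation symmetry of Gromov--Witten correlators.

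First I would describe tangent vectors at the point $\Lcal_X|_{\qCoh{z}}$. Differentiating the explicit formula from \S \ref{subsection Lagrangian cone} in the direction $a \in \Hcal_+$, and using the symmetry of the markings in the correlator to collapse the product-rule sum, gives
\[ v_a = a(z) + \sum_{\beta,\, m \geq 0} \frac{Q^\beta}{m!} \bigg\langle a(\psi_1), \tCoh{\psi_2}, \ldots, \tCoh{\psi_{m+1}}, \frac{\varphi_\gamma}{-z - \psi_{m+2}} \bigg\rangle^X_{0, m+2, \beta} \varphi^\gamma. \]
Writing $v_a = a + v_a^-$ with $v_a^- \in \Hcal_-$ (and similarly for $v_b$), and using that both $\Hcal_+$ and $\Hcal_-$ are isotropic, the pairing $\Omega(v_a,v_b)$ reduces to the two mixed residues $\operatorname{Res}_{z=0}(a(-z),v_b^-(z))\,\mathrm{d}z$ and $\operatorname{Res}_{z=0}(v_a^-(-z),b(z))\,\mathrm{d}z$.

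Next I would evaluate these residues. Expanding $a(z) = \sum_l a_l z^l$ and $\frac{1}{-z-\psi} = \sum_k \psi^k(-z)^{-1-k}$, each residue extracts the $l=k$ diagonal, and after using $(a_k,\varphi^\gamma)\varphi_\gamma = a_k$ together with $\sum_k a_k \psi^k = a(\psi)$ the pairing collapses to a difference of two GW correlators differing only by the permutation of marked points $1$ and $m{+}2$. By the standard symmetry axiom for GW invariants these agree, and the pairing vanishes.

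The only genuinely delicate step is the sign bookkeeping in the residue computation, involving $(-z)^{-1-k}$, the evaluation at $-z$, and the Poincar\'e pairing. A more moduli-theoretic variant --- for instance, pushing forward an equivariant class from the space of relative stable maps to $(X \times \PP^1, X \times \infty)$ of class $(\beta,2)$ with two tangency-$1$ markings at $X_\infty$, and invoking the manifest $\mathfrak{S}_2$-symmetry of the push-forward --- is also available, but it requires additional geometric setup and does not appear to shorten the argument.
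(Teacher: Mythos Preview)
Your argument is correct and complete: the residue computation genuinely collapses to a difference of correlators related by swapping two marked points, and the $\mathfrak{S}_n$-symmetry of Gromov--Witten invariants finishes it. This is, however, a different route from the one taken in the paper.

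The paper first identifies $T_f\Lcal_X$ with ${S_{\tCoh{z}}}^*(-z)(\Hcal_+)$ (for a suitable extension of $S^*$ to $\Hcal_+$), so that the pairing $\Omega(v_a,v_b)$ becomes $\operatorname{Res}_{z=0}\big({S_{\tCoh{z}}}^*(z)(\mathbf{r}(-z)),\,{S_{\tCoh{z}}}^*(-z)(\mathbf{u}(z))\big)\,\diff z$. It then realises the Poincar\'e pairing inside the residue as an \emph{equivariant integral} over the moduli space of relative stable maps to $(X\times\PP^1\,|\,X_0+X_\infty)$ of class $(\beta,1)$, with relative markings at \emph{both} $X_0$ and $X_\infty$ carrying the insertions $\mathbf{r}(\psi_0)$ and $\mathbf{u}(\psi_\infty)$. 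Since an equivariant integral is a polynomial in $z$, its $z^{-1}$-coefficient vanishes, and this is exactly the symplectic pairing.

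Your approach is more elementary and entirely self-contained: it needs nothing beyond the definition of $\Omega$ and the symmetry axiom. The paper's approach is chosen deliberately to illustrate the ``hidden polynomiality'' theme that runs through \S\ref{subsection Variants and applications}: the vanishing is explained \emph{geometrically}, as a consequence of properness of the moduli space, rather than combinatorially. (Indeed, the paper itself remarks that the result already follows from the general fact that the graph of an exact $1$-form is Lagrangian, so the point of its proof is the method, not the statement.) Note also that the moduli-theoretic variant you sketch at the end --- class $(\beta,2)$ with two tangency-$1$ markings over $X_\infty$ --- is close in spirit but not what the paper does: it uses class $(\beta,1)$ with one relative marking over each of $X_0$ and $X_\infty$, and the polynomiality of a Poincar\'e pairing rather than an $\mathfrak{S}_2$-symmetry.
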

\begin{proof} Let $\qCoh{z} \in \Hcal_+$ be a point in the base and let $f = \Lcal_X|_{\qCoh{z}} \in \Hcal$ be the  point on the cone lying over $\qCoh{z}$. We must show that $\T_f \Lcal_X$ is a Lagrangian subspace of $\Hcal$. First let us describe the points of $\T_f \Lcal_X$. Recall that $f$ is given by:
\begin{equation*} f= \Lcal_X|_{\qCoh{z}} = \qCoh{z} + \sum_{\beta,n} \dfrac{Q^\beta}{n!} \langle \tCoh{\psi_1},\ldots,\tCoh{\psi_n}, \left( \dfrac{\varphi_\gamma}{-z-\psi} \right) \rangle^X_{0,n+1,\beta} \cdot \varphi^\gamma . \end{equation*}
Since $\Lcal_X$ is the graph of the section $\diff\!\Fcal^0_X$, the tangent space $\T_f\Lcal_X$ is spanned by the partial derivatives of the above expression in the $\Hcal_+$-co-ordinates. Given such a co-ordinate $q_k^\alpha$ the corresponding derivative is:
\begin{equation*} \varphi_\alpha z^k + \sum_{\beta,n} \dfrac{Q^\beta}{n!} \langle \varphi_\alpha \psi^k, \tCoh{\psi_1},\ldots,\tCoh{\psi_n}, \left( \dfrac{\varphi_\gamma}{-z-\psi} \right) \rangle^X_{0,n+2,\beta} \cdot \varphi^\gamma . \end{equation*}
Thus the tangent space consists of vectors in $\Hcal$ of the form
\begin{equation*} \mathbf{r}(z) + \sum_{\beta,n} \dfrac{Q^\beta}{n!} \langle \mathbf{r}(\psi), \tCoh{\psi_1},\ldots,\tCoh{\psi_n}, \left( \dfrac{\varphi_\gamma}{-z-\psi} \right) \rangle^X_{0,n+2,\beta} \cdot \varphi^\gamma \end{equation*}
for $\mathbf{r}(z) \in \Hcal_+$. On the other hand, if we look at the expression \eqref{S* equation} given earlier for ${S_{\tCoh{z}}}^*(z) \in \operatorname{End}(\HH^*(X))\llbracket z^{-1} \rrbracket$, we see that this can be extended in a natural way to give a map $\Hcal_+ \to \Hcal$ via
\begin{equation*} {S_{\tCoh{z}}}^*(z)(\mathbf{r}(z)) = \mathbf{r}(z) + \sum_{\beta,n} \dfrac{Q^\beta}{n!} \langle \mathbf{r}(\psi) , \tCoh{\psi_1}, \ldots, \tCoh{\psi_n} , \left( \dfrac{\varphi_\gamma}{z-\psi} \right) \rangle^X_{0,n+2,\beta} \cdot \varphi^\gamma \end{equation*}
(note that this is \emph{different} from the extension of $S_{\tCoh{z}}(z)$ to an endomorphism of $\Hcal$ which we gave in \S \ref{subsection Fundamental solution matrix}, where we treated the insertion $\mathbf{r}(z)$ formally). Under the above definition, we see that:
\begin{equation*} \T_f \Lcal_X = {S_{\tCoh{z}}}^*(-z)(\Hcal_+).\end{equation*}
Fixing $\mathbf{r}(z), \mathbf{u}(z) \in \Hcal_+$, we thus need to show that
\begin{align*} \Omega \bigg( {S_{\tCoh{z}}}^*(z) (\mathbf{r}(-z)), {S_{\tCoh{z}}}^*(-z) (\mathbf{u}(z) ) \bigg)=0 \end{align*}
which is equivalent to:
\begin{align*} \operatorname{Res}_{z=0} \left( {S_{\tCoh{z}}}^*(z) \mathbf{r}(-z), {S_{\tCoh{z}}}^*(-z) (\mathbf{u}(z) ) \right) \diff\!z = 0.
\end{align*}
We take the moduli space
\begin{equation*} \overline{\mathcal{M}}_{0,n,(1),(1)}\left( (X\times\PP^1 \ | \ X_0 + X_\infty), (\beta,1) \right) \end{equation*}
as before and consider the equivariant integral (against the virtual class) of the following class:
\begin{equation*} \sum_{\beta,n} \dfrac{Q^\beta}{n!} \left( \ev_0^* ( \mathbf{r}(\psi_0) ) \cdot \prod_{i=1}^n \ev_i^* ( \tCoh{\psi_i} ) \cdot \ev_\infty^* (\mathbf{u}(\psi_\infty))\right). \end{equation*}
Then an analogous computation to the one given in \S \ref{section Proof of the proposition} shows that this integral is equal to:
\begin{equation*} \left( {S_{\tCoh{z}}}^*(z) (\mathbf{r}(-z)), {S_{\tCoh{z}}}^*(-z) (\mathbf{u}(z) ) \right).\end{equation*}
Thus the above pairing is a polynomial in $z$, and so in particular the coefficient of $z^{-1}$ vanishes. But this is precisely the residue that we needed to calculate, and the claim follows. \end{proof}

Another fundamental fact about $\Lcal_X$, already discussed in \S \ref{subsection Lagrangian cone}, is that:
\begin{equation*} (\T_f \Lcal_X) \cap \Lcal_X = z \cdot \T_f \Lcal_X. \end{equation*}
To finish, we will give a direct proof of one important consequence of this fact.
\begin{proposition} $f \in z \cdot \T_f\Lcal_X$.\end{proposition}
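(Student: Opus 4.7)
The plan is to combine Proposition~\ref{main result}, Theorem~\ref{polynomiality for S and L} and Proposition~\ref{fsm transpose inverse}. First, Proposition~\ref{main result} together with Theorem~\ref{polynomiality for S and L} yields the factorisation
\[ S_{\tCoh{z}}(z)(f) \;=\; -z\cdot \mathbf{g}(z), \qquad \mathbf{g}(z) \;:=\; (\ev_\infty)_*\!\left(\sum_{\beta,n}\frac{Q^\beta}{n!} \prod_{i=1}^{n} \ev_i^*(\tCoh{\psi_i})\right) \in \Hcal_+. \]
The explicit factor of $-z$ appearing in $\Theta_{\beta,n}(\tCoh{z})$, together with the polynomiality of equivariant pushforwards, produces the $z$-divisibility.

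Next, a short computation using the identity $\tfrac{zh}{z-\psi_0} = h + \tfrac{\psi_0 h}{z-\psi_0}$ shows that $S_{\tCoh{z}}(z)$ commutes with multiplication by $z$; by Proposition~\ref{fsm transpose inverse} the same is then true of its inverse ${S_{\tCoh{z}}}^*(-z)$. Applying this inverse to the factorisation above yields
\[ f \;=\; -z\cdot {S_{\tCoh{z}}}^*(-z)(\mathbf{g}(z)), \]
so it suffices to verify that ${S_{\tCoh{z}}}^*(-z)(\mathbf{g}(z))\in\T_f\Lcal_X$.

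From the proof of Proposition~\ref{prop L a cone} we have the description $\T_f\Lcal_X = {S_{\tCoh{z}}}^*(-z)(\Hcal_+)$, using the \emph{natural} extension $\mathbf{r}(z)\mapsto \mathbf{r}(\psi)$ to $\Hcal_+$. This differs a priori from the \emph{formal} extension which actually inverts $S_{\tCoh{z}}(z)$. To reconcile the two I would adapt the pushforward calculation of Proposition~\ref{fsm transpose inverse}, replacing the insertion $\ev_0^*(v)$ with $\ev_0^*(\mathbf{r}(\psi_0))$; equivariant polynomiality then forces the composition $S_{\tCoh{z}}(z)\circ{S_{\tCoh{z}}}^*(-z)_{\mathrm{nat}}$ to map $\Hcal_+$ into $\Hcal_+$, and its $Q=0$ specialisation is the identity, so this restricted operator is invertible. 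It follows that the natural and formal extensions share a common image in $\Hcal$, whence ${S_{\tCoh{z}}}^*(-z)(\mathbf{g}(z))\in\T_f\Lcal_X$ and
\[ f \in -z\cdot \T_f\Lcal_X \;=\; z\cdot \T_f\Lcal_X, \]
as required. The main expected obstacle is precisely this reconciliation of the two extensions in the final step; once it is in hand, the rest of the argument is essentially formal.
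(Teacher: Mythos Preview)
Your proposal follows the same overall route as the paper: from $S_{\tCoh{z}}(z)(f)\in z\cdot\Hcal_+$, apply the inverse ${S_{\tCoh{z}}}^*(-z)$ and then identify ${S_{\tCoh{z}}}^*(-z)(\Hcal_+)$ with $\T_f\Lcal_X$. The paper simply quotes this last identification (under the \emph{formal}, $z$-linear extension) as a ``deep fact from the theory'' and stops there; you instead sketch a proof of it by comparing the formal and natural extensions via a further polynomiality argument. That sketch is sound: once you know $A:=S_{\tCoh{z}}(z)\circ{S_{\tCoh{z}}}^*(-z)_{\mathrm{nat}}$ maps $\Hcal_+$ to $\Hcal_+$ and reduces to the identity at $Q=0$, it is invertible over the Novikov ring, and then ${S_{\tCoh{z}}}^*(-z)_{\mathrm{nat}}={S_{\tCoh{z}}}^*(-z)_{\mathrm{formal}}\circ A$ shows the two images coincide. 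So your argument is actually more self-contained than the paper's.

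One small correction: the commutation of $S_{\tCoh{z}}(z)$ with multiplication by $z$ is immediate for the formal extension, since that extension is $z$-linear by construction; the identity $\tfrac{zh}{z-\psi_0}=h+\tfrac{\psi_0 h}{z-\psi_0}$ you quote is not what is doing the work there (it would be relevant to the \emph{natural} extension, which does \emph{not} commute with $z$). This does not affect the validity of your argument, only the justification of that one line.
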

\begin{proof} As noted before, an immediate consequence of Proposition \ref{main result} is that:
\begin{equation*}S_{\tCoh{z}}(z)(f) \in z\cdot\Hcal_+ . \end{equation*}
Applying ${S_{\tCoh{z}}}^*(-z)$ to both sides, we find that
\begin{equation*} f \in {S_{\tCoh{z}}}^*(-z) \left( z \cdot \Hcal_+ \right) \end{equation*}
where, unlike in the proof of Proposition \ref{prop L a cone}, the extension of ${S_{\tCoh{z}}}^*(-z)$ from $\HH^*(X)$ to $\Hcal_+=\HH^*(X)[z]$ is obtained by expanding linearly in $z$. A deep fact from the theory now says that, under this definition:
\begin{equation*} {S_{\tCoh{z}}}^*(-z)(\Hcal_+) = \T_f \Lcal_X. \end{equation*}
Some care is required here: we also saw this statement in the proof of the previous proposition, but that was for a different extension of ${S_{\tCoh{z}}}^*(-z)$ which was not linear in $z$. Under the new extension used here, which is linear in $z$, the statement still holds, though it is much less trivial. Using this, we obtain
\begin{equation*} f \in z \cdot {S_{\tCoh{z}}}^*(-z)(\Hcal_+) = z \cdot \T_f \Lcal_X \end{equation*}
as required.
\end{proof}

\begin{remark} The idea of using torus localisation to prove that certain generating functions are polynomials is not new. It was used by Givental in the proof of the Mirror Theorem \cite{GiventalEquivariant} and by I. Ciocan-Fontanine and B. Kim in the proof of the wall-crossing formula for quasimap invariants \cite{CiocanFontanineKimWallMirror}. The disussion above constitutes a small continuation of this story. \end{remark}

\bibliographystyle{alpha}
\footnotesize{\bibliography{Bibliography}}

\bigskip\bigskip

\noindent Navid Nabijou \\
School of Mathematics and Statistics, University of Glasgow \\
\texttt{navid.nabijou@glasgow.ac.uk}

\end{document}